\newcommand{\sr}[1]{{\cal #1}}
\newcommand{\dd}[1]{\mathbb{#1}}
\newcommand{\ol}{\overline}
\newcommand{\pend}{\hfill \thicklines \framebox(6.6,6.6)[l]{}}
\newenvironment{proof*}[1]{\noindent {\sc  #1} \rm}{\pend}
\newcommand{\eq}[1]{(\ref{eq:#1})}
\newcommand{\lem}[1]{Lemma~\ref{lem:#1}}
\newcommand{\cor}[1]{Corollary~\ref{cor:#1}}
\newcommand{\thr}[1]{Theorem~\ref{thr:#1}}
\newcommand{\rem}[1]{Remark~\ref{rem:#1}}
\newcommand{\app}[1]{Appendix~\ref{app:#1}}
\newcommand{\sectn}[1]{Section~\ref{sect:#1}}
\newcommand{\sect}[1]{\ref{sect:#1}}
\newcommand{\tN}{\widetilde{N}}
\newtheorem{theorem}{Theorem}[section]
\newtheorem{lemma}{Lemma}[section]
\newtheorem{proposition}{Proposition}[section]
\newtheorem{remark}{Remark}[section]
\newtheorem{corollary}{Corollary}[section]
\newcommand{\rd}{{\rm d}}
\newcommand{\re}{{\rm e}}
\newcommand{\bbE}{{\dd{E}}}
\newcommand{\bbP}{{\dd{P}}}
\newcommand{\bbR}{{\dd{R}}}
\newcommand{\bbZ}{{\dd{Z}}}
\newcommand{\barbbP}{{\overline{\bbP}}}
\newcommand{\wtillambda}{{\widetilde{\lambda}}}
\newcommand{\var}{\operatorname{var}}
\newcommand{\setsection}[2] {
\setcounter{section}{#1}
\setcounter{subsection}{0}
\setcounter{equation}{0}
\setcounter{conjecture}{0}
\setcounter{assumption}{0}
\setcounter{question}{0}
\setcounter{definition}{0}
\setcounter{theorem}{0}
\setcounter{corollary}{0}
\setcounter{lemma}{0}
\setcounter{proposition}{0}
\setcounter{remark}{0}
\setcounter{appen}{0}
\setsection*{\large \bf \thesection. #2}}
\newenvironment{mylist}[1]{\begin{list}{}
{\setlength{\itemindent}{#1mm}}
{\setlength{\itemsep}{0ex plus 0.2ex}}
{\setlength{\parsep}{0.5ex plus 0.2ex}}
{\setlength{\labelwidth}{10mm}}
}{\end{list}}
\newcommand{\setnewcounter} {
\setcounter{subsection}{0}
\setcounter{equation}{0}
\setcounter{conjecture}{0}
\setcounter{assumption}{0}
\setcounter{question}{0}
\setcounter{definition}{0}
\setcounter{theorem}{0}
\setcounter{corollary}{0}
\setcounter{lemma}{0}
\setcounter{proposition}{0}
\setcounter{remark}{0}
}
\begin{document}
\title{\bf \Large A martingale view of Blackwell's renewal theorem and its extensions to a general counting process}

\author{Daryl J. Daley\\The University of Melbourne \and Masakiyo Miyazawa
\\Tokyo University of Science}
\date{December 21, 2018} 

\maketitle

\begin{abstract}
Martingales constitute a basic tool in stochastic analysis; this paper
considers their application to counting processes. We use this tool to revisit
Blackwell's renewal theorem and its extensions for various counting processes.
We first consider a renewal process as a pilot example, deriving a new
semimartingale representation that differs from the standard decomposition
via the stochastic intensity function. We then revisit Blackwell's renewal
theorem, its refinements and extensions. Based on these observations,
we extend the semimartingale representation to a general counting process,
and give conditions under which asymptotic behaviour similar to Blackwell's
renewal theorem holds.
\end{abstract}
  
\begin{quotation}
\noindent {\bf Keywords}: renewal process, semimartingale, Blackwell's renewal theorem, key renewal theorem, Wald's identity, stationary intervals, Palm distribution, modulated renewal process, Smith's asymptotic formulae

\medskip

\noindent {\bf Mathematics Subject Classification}: 60K25, 60J27, 60K37
\end{quotation}

\section{Introduction}
\label{sect:introduction}
\setnewcounter

Let $N(t)$, $t \ge 0$, be a counting process with $\dd{E}[N(t)]$ finite for all $t \ge 0$.  We are interested in the asymptotic behaviour of
$\bbE[N(t)]$ for large $t$. For example, if $N(t)$ is a renewal process whose
lifetime distribution is nonarithmetic and has a finite mean $1/\lambda$,
Blackwell's renewal theorem, that for any finite $h>0$,
$\bbE[N(t+h)-N(t)] \to \lambda h$ for $t\to\infty$, is
a well known result, and has been extended to Markov renewal processes (see
e.g.\ \cite{Alsm1997,Cinl1975}). This theorem motivates us to consider under
what conditions it may hold for a general counting process.

To answer this question, we need a suitable description for the dynamics of a general counting process.
There have been various studies of refinements and extensions of Blackwell's renewal theorem (see e.g.\ \cite{Asmu2003,DaleMoha1978,DaleVere1988,Fell1971}), but they are based on independence or Markov assumptions on the counting process.
Hence, such traditional approaches may not be suitable for the present problem.
In this paper we use martingales to study this question.
In general, a martingale is used to construct an unbiased purely random component of a stochastic process.
For {\it any\/} counting process $N(t)$ that is assumed to be right-continuous
in $t$ and for which $\dd{E}[N(t)]<\infty$ for finite $t$, a martingale $M(t)$
typically arises in a semimartingale representation
\begin{equation}
\label{eq:decomposition 1}
  N(t) = \Lambda(t) + M(t), \qquad t \ge 0,
\end{equation}
where $\Lambda(t)$ is a process of bounded variation.
Here we must be careful about two things.
One is the filtration $\dd{F} \equiv \{\sr{F}_{t}; t \ge 0\}$ to which
$N(\cdot) \equiv \{N(t); t \ge 0\}$ is adapted; the other is the
predictability of $\Lambda(\cdot) \equiv \{\Lambda(t); t \ge 0\}$
with respect to the filtration, where $\Lambda(\cdot)$ is said to be $\dd{F}$-predictable (or simply predictable if $\dd{F}$ is clearly recognized) if for every $t \ge 0$, $\Lambda(t)$ is measurable with respect to
$\sr{F}_{t-} \equiv \sigma\big(\bigcup_{s < t} \sr{F}_s\big)$.

Consider the filtration $\dd{F}^{N} \equiv \{\sr{F}^{N}_{t}; t \ge 0\}$, where $\sr{F}^{N}_{t} = \sigma(\{N(u); u \le t\})$. When $\Lambda(\cdot)$ is $\dd{F}^{N}$-predictable, both it and therefore the martingale
$M(\cdot)$ are a.s.\ uniquely determined by virtue of the Doob--Meyer
decomposition because $N(\cdot)$ is a submartingale (see e.g.\ 
\cite[Lemma 25.7]{Kall2001}).
Such predictable $\Lambda(\cdot)$ is called a compensator, and
in this case $\Lambda(t)$ is nondecreasing in $t$.  Consequently,
if $\Lambda(t)$ is absolutely continuous with respect to Lebesgue measure,
then we can write 
\begin{equation}
\label{eq:compensator 1}
  \Lambda(t) = N(0) + \int_{0}^{t} \lambda_{u} \,\rd u,
\end{equation}
where $\lambda_{t}$ is a non-negative process and can be predictable with
respect to $\dd{F}^{N}$, and called a stochastic intensity.
In particular, $\lambda_{t}$ is called the {\it hazard rate} function when
$N(\cdot)$ is a renewal process (see e.g.\ \cite{Brem1981,JacoShir2003}). 
However, such $\lambda_{t}$ may not be amenable to our asymptotic analysis
except when             $\lambda_{t}$ is a deterministic function of $t$ or
its randomization, namely, $N(\cdot)$ is a Poisson or doubly stochastic
Poisson process, either of which is less
interesting for us. This may be the reason why the semimartingale
representation \eq{decomposition 1} has been little used in renewal theory.

In this paper, we study counting processes using martingales. However, we do
not use the filtration $\dd{F}^{N}$; rather, we consider non-predictable
$\Lambda(\cdot)$ that makes our asymptotic analysis tractable.
To this end, we formally introduce the counting process and related notations.
Let $N(\cdot)$ be a non-negative integer-valued process such that $N(t)$
is finite, nondecreasing, right-continuous, has left-hand limits in $t$ and
$\Delta N(t) \equiv N(t) - N(t-) \le 1$ for all $t \ge 0$, where $N(0-) = 0$.
Define the $n^{\rm th}$ counting time of $N(\cdot)$ by
\begin{align}
\label{eq:tndef}
  t_{n-1} = \inf\{t \ge 0; n \le N(t) \}, \qquad n \ge 1.
\end{align}
Since $N(t)$ is finite for all $t \ge 0$, $\{t_{n-1}, n=1,2,\ldots\}$ has no
accumulation point in $[0,\infty)$. Thus, $N(\cdot)$ is a general orderly
counting process. Unless stated otherwise, assume that $t_0 = 0$, and speak of
$N(\cdot)$ as being {\it non-delayed}.
Otherwise (so, $t_{0} > 0$), $N(\cdot)$ is said to be delayed. In either case,
$n \le N(t)$ if and only if $t_{n-1} \le t$. Let $T_0=t_0$ and
$T_n = t_n - t_{n-1}$ for $n \ge 1$.  Let $R(t)$ be
the residual time to the next counting instant at time $t$, that is,
\begin{align} \label{eq:Rdef}
  R(t) = T_0 + \sum_{\ell=1}^{N(t)} T_\ell - t, \qquad t \ge 0,
\end{align}
and define $X(t) = \big(N(t),R(t)\big)$ for $t \ge 0$, where
\begin{align*}
  X(0) = \begin{cases}
  (1,T_1) & \mbox{ if } t_0 = 0, \\
  (0,T_0) & \mbox{ if } t_0 > 0.
\end{cases}
\end{align*}

From the definition of $N(\cdot)$, for all $t\ge 0$ $X(t)$ is right-continuous
and has
a limit from the left. Hence, $N(t_{n}) = n+1$, $R(t_{n}-) = 0$
and $R(t_{n}) = T_{n+1}$.  Let $\dd{F}^{X} \equiv \{\sr{F}^{X}_{t}; t \ge 0\}$, where
\begin{equation}
\label{eq:filtration 1}
   \sr{F}^{X}_{t} = \sigma(\{X(u); u \le t\}), \qquad t \ge 0.
\end{equation}
Observe that $N(\cdot)$ is predictable under this filtration, but $R(\cdot)$ may not be the case because $R(t)$ can not be predicted by $\sr{F}^{X}_{t-}$ when $R(t-)=0$ unless all $T_{n}$s are deterministic.

In what follows, we use a filtration $\dd{F} \equiv \{\sr{F}_{t}; t \ge 0\}$ to which
$X(\cdot) \equiv \{X(t); t \ge 0\}$ is adapted, that is, $\sr{F}_{t}^{X} \subset \sr{F}_{t}$ for all $t \ge 0$, and we then write $\dd{F}^{X} \preceq \dd{F}$.
For convenience, we put $N(0-) = 0$ and $\sr{F}^{X}_{0-} = \sr{F}_{0-} = \sigma(R(0-))$ unless otherwise stated, where $R(0-) = R(0)1(N(0)=0)$. In most cases, $\dd{F} = \dd{F}^{X}$ is sufficient, but a larger $\dd{F}$ is needed in Sections \sect{modulated} and \sect{stationary IA}. For a stopping time $\tau$, define $\sr{F}_{\tau-}$ as
\begin{align*}
  \sr{F}_{\tau-} = \sigma(\{A\cap\{t < \tau\}; A\in \sr{F}_t , t \ge 0\}).
\end{align*}
Since $t_{n}$ is a stopping time with respect to $\dd{F}^{X}$, it is an $\dd{F}$-stopping time. Hence, we have the following fact.
\begin{lemma}[I.1.14 of \cite{JacoShir2003}]
\label{lem:measurable 1}
For each $n \ge 0$, $t_{n}$ is a stopping time and $\sr{F}_{t_{n}-}$-measurable.
\end{lemma}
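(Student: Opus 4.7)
The lemma has two parts: $t_{n}$ being an $\dd{F}$-stopping time, and $t_{n}$ being $\sr{F}_{t_{n}-}$-measurable. Both should follow almost directly from the definitions, the right-continuity of $N(\cdot)$, and the relation $\dd{F}^{X}\preceq\dd{F}$.

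The plan is, first, to verify that $t_{n}$ is a stopping time in $\dd{F}^{X}$. Using the equivalence stated in the text, $n+1\le N(t)$ iff $t_{n}\le t$, I have
\begin{equation*}
  \{t_{n}\le t\} = \{N(t)\ge n+1\} \in \sr{F}^{X}_{t},
\end{equation*}
where right-continuity of $N(\cdot)$ ensures this event is correctly captured by $\sr{F}^{X}_{t}$ (the infimum in \eq{tndef} is attained). Because $\sr{F}^{X}_{t}\subset\sr{F}_{t}$, the same event lies in $\sr{F}_{t}$, so $t_{n}$ is an $\dd{F}$-stopping time.

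Second, to show $t_{n}$ is $\sr{F}_{t_{n}-}$-measurable, I would show that $\{t_{n}>s\}\in\sr{F}_{t_{n}-}$ for every $s\ge 0$; equivalently, $\{t_{n}\le s\}\in\sr{F}_{t_{n}-}$. Using the paper's definition $\sr{F}_{t_{n}-}=\sigma(\{A\cap\{t<t_{n}\}:A\in\sr{F}_{t},\,t\ge 0\})$ applied with the choice $A=\Omega\in\sr{F}_{s}$ and $t=s$, I obtain
\begin{equation*}
  \{t_{n}>s\} = \Omega\cap\{s<t_{n}\} \in \sr{F}_{t_{n}-},
\end{equation*}
so the complementary event is in $\sr{F}_{t_{n}-}$ as well. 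Since this holds for every $s\ge 0$, the generating class $\{\{t_{n}\le s\}:s\ge 0\}$ for the Borel sets of $[0,\infty]$ lies in $\sr{F}_{t_{n}-}$, and hence $t_{n}$ itself is $\sr{F}_{t_{n}-}$-measurable.

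There is no real obstacle: the argument is a direct unpacking of the definitions, with the only subtle point being the correct handling of the right-continuity of $N(\cdot)$ in checking the stopping-time property, and the choice $A=\Omega$ in invoking the generator of $\sr{F}_{t_{n}-}$. The key structural input is that $t_{n}$ is not merely a stopping time but is announced by the left-limit information $\sr{F}_{t_{n}-}$, which is essentially the content of I.1.14 in \cite{JacoShir2003}.
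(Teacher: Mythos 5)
Your proof is correct. The paper does not supply its own argument for this lemma; it simply cites Lemma I.1.14 of Jacod and Shiryaev as a known fact. Your two steps — establishing the stopping-time property via $\{t_{n}\le t\}=\{N(t)\ge n+1\}\in\sr{F}^{X}_{t}\subset\sr{F}_{t}$ (using right-continuity to guarantee the infimum in \eq{tndef} is attained), and then deducing $\sr{F}_{t_{n}-}$-measurability by observing that $\{t_{n}>s\}=\Omega\cap\{s<t_{n}\}$ is one of the generators of $\sr{F}_{t_{n}-}$ for every $s\ge 0$ — constitute precisely the standard textbook proof of the cited result, so you have made explicit what the paper leaves as a reference.
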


We make the following two assumptions throughout the paper unless stated
otherwise.
\begin{itemize}
\item [(A.1)] $X(0) = (1,T_{1})$;
\item [(A.2)] $\dd{E}[T_{n}] < \infty$ for $n \ge 1$, where $T_{n} > 0$ almost surely by the orderliness of $N(\cdot)$.
\end{itemize}

First we consider a renewal process as a pilot example. In this case, we
assume the following
in addition to (A.1) and (A.2).
\begin{itemize}
\item [(A.3)] For $n \ge 1$, $T_{n}$ is independent of $\sr{F}_{t_{n-1}-}$,
and $T_1, T_2, \ldots$ are identically distributed; their common
distribution is denoted by $F$.  If $\dd{F} = \dd{F}^{X}$, then these
conditions are the same as $T_1, T_2, \ldots$ being independent and identically
distributed with common distribution $F$.
\end{itemize}
For such a renewal process, we derive a new semimartingale representation; it
differs from the standard representation that uses the stochastic intensity
function (see \thr{martingale 2}). This enables us to revisit Blackwell's
renewal theorem and consider some of its refinements and extensions.
Based on these observations, we extend the semimartingale representation
to a \textit{general counting process}, for which we replace the key renewal
assumption (A.3) by
\begin{itemize}
\item [(A.4)] $\dd{E}[N(t)] < \infty$ for finite $t \ge 0$.
\end{itemize}
We then give conditions under which an asymptotic result similar to Blackwell's
renewal theorem holds. In particular, Blackwell's result extends quite
naturally to the counting process which is generated by a stationary sequence
of inter-arrival times, in other words, the counting process under a Palm
distribution (see \cor{Palm 1}).

Our paper has five more sections.  In \sectn{martingale}, we present a general
framework for the new semimartingale representation.  We apply it to a renewal
process $N(t)$, and consider its interpretation.
In \sectn{revisit}, we revisit Blackwell's renewal theorem and some other
asymptotic properties of $\dd{E}[N(t)]$.  In \sectn{2nd},
we show how the present approach can be used to derive limit properties of
$\var N(t)$. In \sectn{extension}, a new semimartingale
representation is derived for a general counting process, and Blackwell's
renewal theorem is extended to a range of scenarios. We give concluding
remarks in \sectn{concluding}.
Some proofs are deferred to an appendix. 


\section{A semimartingale representation}
\label{sect:martingale}
\setnewcounter

As discussed in \sectn{introduction}, our aim is to derive the semimartingale
representation \eq{decomposition 1} for a general counting process $N(\cdot)$.
We first consider this problem in a broader context. 

\subsection{Martingales from a general setting}
\label{sect:deriving}

Let $\dd{F} \equiv \{\sr{F}_{t}; t \ge 0\}$ be a filtration, let $N(\cdot)$
with $N(0-)=0$ be the orderly counting process introduced in
\sectn{introduction}, and let $Y(\cdot) \equiv \{Y(t); t \ge 0\}$ with
$Y(0-) \in \sr{F}_{0-}$ be a real-valued stochastic process that is
right-continuous and has left-hand limits.  Then assume that
\begin{itemize}
\item [(B.1)] $N(\cdot)$ is $\dd{F}$-predictable, and $Y(\cdot)$ is
$\dd{F}$-adapted;
\item [(B.2)] for every $t\ge 0$, $N(t)$ increases if $Y(t) \ne Y(t-)$; and
\item [(B.3)] for every $t\ge 0$, $Y(t)$ has a right-hand
derivative, denoted $Y'(t)$.
\end{itemize}
Note that (B.2) does not exclude the case that $N(t)$ increases when
$Y(t) = Y(t-)$. Further, define $R(t)$ by \eq{Rdef}, and let $X(t) =
\big(N(t),R(t))$; then it is not hard to see that (B.1) implies that
$\dd{F}^X \preceq \dd{F}$.

Since $t_0 =0$ only if $N(0)=1$, it now follows easily from elementary
calculus that
\begin{equation}
\label{eq:evolution 1}
  Y(t) = Y(0-) + \int_0^t Y'(u)\,\rd u +\sum_{n=0}^{N(t)-1}\Delta Y(t_{n}),
\qquad t \ge 0,
\end{equation}
where $\Delta Y(t) = Y(t) - Y(t-)$. Let
\begin{align}
\label{eq:martingale 1}
 & M_{Y}(t) := \sum_{n=0}^{N(t)-1} \big(Y(t_n) - \bbE[Y(t_n)\mid\sr{F}_{t_n-}]\big),\\
\label{eq:DY 1}
 & D_Y(t) :=
\sum_{n=0}^{N(t)-1} \big(\bbE[Y(t_n)\mid\sr{F}_{t_{n}-}] - Y(t_n-)\big).
\end{align}
With these two functions we then have the following lemma.

\begin{lemma}
\label{lem:martingale 1}
Assume that $N(\cdot) \equiv \{N(t); t \ge 0\}$ with $N(0-)=0$ and $Y(\cdot)$
with $Y(0-) \in\sr{F}_{0-}$ satisfy conditions {\rm(A.4)} and {\rm(B.1)--(B.3)}.
If
\begin{mylist}{0}
\item [\rm (B.4)] for some constant $c_{0} > 0$, $\sup_{n \ge 0}
 \dd{E}\big[|\Delta Y(t_{n})|\bigm| \sr{F}_{t_{n}-}\big] \le c_{0}$ almost surely,
\end{mylist}
then $M_Y(\cdot)$
is an $\dd{F}$-martingale, and
\begin{equation}
\label{eq:decomposition 2}
  Y(t) = Y(0-) + \int_0^t Y'(u) \,\rd u + D_{Y}(t) + M_{Y}(t),  \qquad t \ge 0.
\end{equation}
\end{lemma}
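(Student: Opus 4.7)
The plan is to split the argument into two parts: first derive the decomposition \eq{decomposition 2} as a direct identity, and only then establish the martingale property of $M_{Y}(\cdot)$. The identity is immediate, because writing
\[
 \Delta Y(t_{n}) = \big(Y(t_{n}) - \bbE[Y(t_{n}) \mid \sr{F}_{t_{n}-}]\big) + \big(\bbE[Y(t_{n}) \mid \sr{F}_{t_{n}-}] - Y(t_{n}-)\big)
\]
and substituting into \eq{evolution 1} replaces $\sum_{n=0}^{N(t)-1} \Delta Y(t_{n})$ by $M_{Y}(t) + D_{Y}(t)$, which is exactly \eq{decomposition 2}.

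For the martingale property, set $\xi_{n} := Y(t_{n}) - \bbE[Y(t_{n}) \mid \sr{F}_{t_{n}-}]$, so that $M_{Y}(t) = \sum_{n \ge 0} \xi_{n}\, 1(t_{n} \le t)$. Each $\xi_{n}$ is $\sr{F}_{t_{n}}$-measurable and satisfies $\bbE[\xi_{n} \mid \sr{F}_{t_{n}-}] = 0$; since $Y(t_{n}-) \in \sr{F}_{t_{n}-}$ one may rewrite $\xi_{n} = \Delta Y(t_{n}) - \bbE[\Delta Y(t_{n}) \mid \sr{F}_{t_{n}-}]$, whence (B.4) gives $\bbE[|\xi_{n}| \mid \sr{F}_{t_{n}-}] \le 2c_{0}$ a.s.

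The key technical input is that for any $\dd{F}$-stopping time $\tau$, any $s \le t$, and any $A \in \sr{F}_{s}$, the sets $\{\tau \le t\}$ and $A \cap \{s < \tau\}$ both lie in $\sr{F}_{\tau-}$; the first comes from taking $A = \Omega$ in the definition of $\sr{F}_{\tau-}$ given just before \lem{measurable 1}, and the second is exactly that definition. Applied to $\tau = t_{n}$, this makes $1_{A}\, 1(s < t_{n} \le t)$ $\sr{F}_{t_{n}-}$-measurable, so that
\[
 \bbE\big[1_{A}\, \xi_{n}\, 1(s < t_{n} \le t)\big] = \bbE\big[1_{A}\, 1(s < t_{n} \le t)\, \bbE[\xi_{n} \mid \sr{F}_{t_{n}-}]\big] = 0.
\]
Summing over $n$, justified by the integrability bound below, gives $\bbE[1_{A}(M_{Y}(t) - M_{Y}(s))] = 0$ for every $A \in \sr{F}_{s}$, i.e.\ $M_{Y}(\cdot)$ is an $\dd{F}$-martingale.

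Integrability of $M_{Y}(t)$ and the Fubini interchange are handled by the same conditioning trick combined with (A.4): $\sum_{n} \bbE[|\xi_{n}|\, 1(t_{n} \le t)] = \sum_{n} \bbE\big[1(t_{n} \le t)\, \bbE[|\xi_{n}| \mid \sr{F}_{t_{n}-}]\big] \le 2c_{0} \sum_{n} \bbP(t_{n} \le t) = 2c_{0}\, \bbE[N(t)] < \infty$, and likewise with the restriction $s < t_{n}$. The one step where I would expect to need care is deducing $\{s < t_{n} \le t\} \in \sr{F}_{t_{n}-}$ cleanly from the definition of $\sr{F}_{\tau-}$; once this measurability is in hand, the argument reduces to the single discrete-time martingale-difference identity $\bbE[\xi_{n} \mid \sr{F}_{t_{n}-}] = 0$ applied term by term, and no continuous-time stochastic-calculus machinery is needed.
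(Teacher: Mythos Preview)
Your proof is correct and follows essentially the same route as the paper: derive \eq{decomposition 2} by splitting $\Delta Y(t_n)$ in \eq{evolution 1}, bound $\bbE[|M_Y(t)|]\le 2c_0\,\bbE[N(t)]$ via (A.4) and (B.4), and then verify the martingale property term by term using that $1(s<t_n\le t)$ and $A\cap\{s<t_n\}$ lie in $\sr{F}_{t_n-}$. The only cosmetic difference is that the paper phrases the last step as computing $\bbE[M_Y(t)\mid\sr{F}_s]-M_Y(s)$ directly rather than testing against indicators $1_A$, but the underlying measurability ingredient (that $t_n$ is $\sr{F}_{t_n-}$-measurable, \lem{measurable 1}) is identical.
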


\begin{remark} \rm
\label{rem:martingale 1a}
Condition (B.4) may look too strong, but it works perfectly in this paper. If
this condition fails, we may consider a local martingale by a sequence of
stopping times $\tau_{k} \equiv \inf\{t_{n} \ge 0; |\Delta Y(t_{n})| > k\}$
for $k \ge 1$. Then, similarly to the proof below, we can show that
$M_Y(\cdot)$ is a local $\dd{F}$-martingale.
\end{remark}

\begin{remark} \rm
\label{rem:martingale 1b}
Since in \eq{decomposition 2} both the integration term and $D_Y(t)$ are
predictable, the representation \eq{decomposition 2} for $Y(t)$ is a special
semimartingale, where ``special'' means that the bounded
variation component of a semimartingale is
predictable (see \cite[Chapter 1, section 4c]{JacoShir2003}).
\end{remark}

\begin{proof}
Equation \eq{decomposition 2} follows immediately from \eq{evolution 1}.  Thus,
we only need to prove that $M_{Y}(\cdot)$ is an $\dd{F}$-martingale. For this, note that $n + 1 \le N(t)$ if and only if
$t_n \le t$. Since $Y(t_n) = Y_{t_{n}-} + \Delta Y(t_n)$, (A.4) and (B.4)
imply that 
\begin{align*}
 & \dd{E}[|M_Y(t)|] \le \dd{E} \bigg[\sum_{n=0}^\infty \big|\big(Y(t_n) - \bbE[Y(t_n)\mid\sr{F}_{t_n-}]\big)
\,1(t_n \le t) \big| \bigg]\nonumber\\
 & \quad \le \sum_{n=0}^\infty \dd{E} \Big[ \big(|\Delta Y(t_n)| + 
  \bbE\big[|\Delta Y(t_n)|\mid\sr{F}_{t_n-}\big]\big) 1(t_n \le t\big) \Big] \le 2c_{0} \dd{E}[N(t)] < \infty,
\end{align*}
where $1(\cdot)$ is the indicator function of the statement ``$\cdot$''. Hence, we only need to show that 
\begin{equation}
\label{eq:martingale condition 1}
 \bbE[M_Y(t)\mid\sr{F}_{s}] = M_Y(s), \qquad 0 \le s < t.
\end{equation}
To this end, recall that $t_n$ is $\sr{F}_{t_n-}$-measurable by
\lem{measurable 1}. Hence, we have
\begin{align*}
 M_Y(t) & = \sum_{n=0}^\infty \big(Y(t_n) - \bbE[Y(t_n)\mid\sr{F}_{t_n-}]\big)
\,1(t_n \le t)\nonumber\\
 & = \sum_{n=0}^\infty \Big(\Delta Y(t_n) \,1(t_n \le t) 
  - \bbE[\Delta Y(t_n) \,1(t_n \le t)\mid\sr{F}_{t_n-}]\Big).
\end{align*}
This implies that $\bbE[M_Y(t)\mid\sr{F}_{s}] - M_{Y}(s)$ equals
\begin{align*}
 & \bbE \left(\sum_{n=0}^\infty \big(\big[\Delta Y(t_n)\, 1(s < t_n \le t)
 - \bbE\big[\Delta Y(t_n) 1(s<t_n \le t)\bigm|\sr{F}_{t_n-}\big]\,\big)\Biggm|\sr{F}_s \right)\\
 & \quad = \sum_{n=0}^\infty \bbE\big(\Delta Y(t_n)\, 1(s < t_n \le t)
 - \bbE\big[\Delta Y(t_n) 1(s<t_n \le t)\bigm|\sr{F}_{t_n-}\big]\, \bigm| \sr{F}_s\big)
  = 0,
\end{align*}
where the interchange of the expectation and the summation is justified by
(A.4) and (B.4). This proves \eq{martingale condition 1}, and therefore
$M_Y(\cdot)$ is an $\dd{F}$-martingale.
\end{proof}

In what follows, we apply Lemma \ref{lem:martingale 1} with appropriately
chosen functions $Y(\cdot)$.  For example, we may put $Y(t) = N(t)$ for a
$\dd{F}$-predictable counting process $N(\cdot)$ with $N(0-)=0$ because
(B.1)--(B.3) are obviously satisfied. Then
 $D_Y(t) \equiv N(t)$ and $M_Y(t) \equiv 0$,
and substitution in \eq{decomposition 2} yields the identity $Y(t) = N(t)$, 
so we should learn nothing.  Thus,
it is important to choose $Y(\cdot)$ suitably when applying \lem{martingale 1}.

\subsection{Application to a renewal process}
\label{sect:renewal}

In our first application of \lem{martingale 1} we find the semimartingale
representation \eq{decomposition 1} for a renewal process $N(\cdot)$ defined
by (A.1)--(A.3). This representation is used in establishing asymptotic
properties of moments of $N(t)$ including Blackwell's renewal theorem in
\sectn{revisit}, and extended to a general counting process in
\sectn{extension}.
We first note the following well known fact (see e.g.\
\cite[Lemma in XI.1]{Fell1971}).
\begin{lemma}
\label{lem:N(t) finite}
Conditions {\rm(A.1)--(A.3)} imply {\rm(A.4)}.
\end{lemma}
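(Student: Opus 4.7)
The plan is to reduce $\bbE[N(t)] < \infty$ to a standard Bernoulli thinning argument that does not even require the finite-mean part of (A.2), only that $T_n > 0$ almost surely.

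Under (A.1) we have $t_0 = 0$ and $t_n = T_1 + \cdots + T_n$ for $n \ge 1$, so by the equivalence $n \le N(t) \Leftrightarrow t_{n-1} \le t$,
\begin{equation*}
  \bbE[N(t)] = \sum_{n=1}^{\infty} \bbP(N(t) \ge n) = 1 + \sum_{n=1}^{\infty} \bbP(S_n \le t), \qquad S_n := T_1 + \cdots + T_n.
\end{equation*}
It therefore suffices to show that the series on the right is finite.

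Because $T_1 > 0$ a.s.\ by (A.2), one can pick $a > 0$ small enough that $p := \bbP(T_1 \ge a) > 0$. Set $\xi_i := 1(T_i \ge a)$; by (A.3) the $\xi_i$ are i.i.d.\ Bernoulli$(p)$, and $S_n \ge a(\xi_1 + \cdots + \xi_n)$ pathwise. Hence, writing $K := \lfloor t/a \rfloor$,
\begin{equation*}
 \bbP(S_n \le t) \;\le\; \bbP(\xi_1 + \cdots + \xi_n \le K).
\end{equation*}
The event $\{\xi_1+\cdots+\xi_n \le K\}$ is exactly $\{W > n\}$, where $W$ is the waiting time for the $(K+1)$-st success in the Bernoulli$(p)$ sequence. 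Summing,
\begin{equation*}
 \sum_{n=1}^{\infty} \bbP(\xi_1 + \cdots + \xi_n \le K) \;=\; \sum_{n=1}^{\infty} \bbP(W > n) \;=\; \bbE[W] - 1 \;=\; \frac{K+1}{p} - 1,
\end{equation*}
since $W$ has a negative binomial distribution with mean $(K+1)/p$. Combining the two bounds gives $\bbE[N(t)] \le (K+1)/p < \infty$, which is (A.4).

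There is no real obstacle here: this is essentially the classical finiteness argument for the renewal function referenced to \cite[Lemma in XI.1]{Fell1971}. The only point worth underlining is that the truncation $\xi_i = 1(T_i \ge a)$ bypasses both the absence of a lower bound on $T_i$ and the possible slow decay of $F$ at infinity, so neither $T_i$ being bounded away from $0$ nor a finite mean is actually required for this particular implication.
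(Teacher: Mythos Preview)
Your argument is correct. The paper does not actually supply a proof of this lemma; it merely states it as a well-known fact with a pointer to \cite[Lemma in XI.1]{Fell1971}. Your truncation-and-comparison argument (replace $T_i$ by the Bernoulli indicator $1(T_i\ge a)$ and dominate by a negative-binomial waiting time) is precisely the classical route behind that citation, so there is nothing to contrast.
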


We choose a filtration $\dd{F}$ such that $\dd{F}^{X} \preceq \dd{F}$, where
$\dd{F}^{X}$ is defined through \eq{filtration 1}. Let $Y(t) =
N(t)-\lambda R(t)$; such $Y(\cdot)$ obviously satisfies conditions
(B.1)--(B.3). The idea behind this choice of $Y(\cdot)$ is to introduce a
control parameter $\lambda$ so that $D_Y(t)$ vanishes. A similar idea is used
for the queue length process of a many-server queue in \cite{Miya2017}. Indeed,
\begin{align}
\label{eq:DY 2}
  \bbE[Y(t_n)\mid\sr{F}_{t_n-}] = N(t_n-) + 1 - \lambda \bbE[T_{n+1}]
\,=\, N(t_n-) \,=\, Y(t_n-), \qquad n \ge 0,
\end{align}
and therefore $D_Y(t_n)$ vanishes, where $N(0-) = 0$ and we recall that
$\sr{F}_{0-} = \{\emptyset, \Omega\}$. Further, (A.4) holds
by \lem{N(t) finite}, and we have (B.4) from the bound
\begin{align*}
  \bbE[|\Delta Y(t_{n})|\mid\sr{F}_{t_n-}] =
\bbE\big[|1-\lambda T_{n+1}|\big] \le 2 < \infty.
\end{align*}
Hence, the next theorem is immediate from \lem{martingale 1} because $Y(0-) = 0$ by $R(0-)=0$.

\begin{theorem}
\label{thr:martingale 2}
Let $\dd{F}$ be a filtration such that $\dd{F}^{X} \preceq \dd{F}$, and assume {\rm(A.1)--(A.3)}. Then the renewal process $N(t)$ is expressible as
\begin{equation}
\label{eq:decomposition 3a}
   N(t) = \lambda \big(t+R(t)\big) + M(t), \qquad t \ge 0,
\end{equation}
where
\begin{equation}
\label{eq:martingale 2}
  M(t) = \sum_{n=0}^{N(t)-1} (1 - \lambda T_{n+1})
\,= \sum_{n=1}^{N(t)} (1 - \lambda T_{n}), \qquad t \ge 0,
\end{equation}
is an $\dd{F}$-martingale.
\end{theorem}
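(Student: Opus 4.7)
The plan is to invoke \lem{martingale 1} with the specific choice $Y(t) := N(t) - \lambda R(t)$ already suggested in the paragraph preceding the theorem, so the proof is almost entirely verification of the hypotheses of that lemma.

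First I would spell out the pathwise structure of $Y(\cdot)$. Between successive counting times $t_{n-1}$ and $t_n$, $N(\cdot)$ is constant and $R(\cdot)$ decreases at unit rate, so $Y(\cdot)$ is continuous with right-hand derivative $Y'(t)=\lambda$ everywhere; in particular (B.3) holds and $\int_0^t Y'(u)\,\rd u = \lambda t$. At each counting time $t_n$ one has $\Delta N(t_n)=1$ and $\Delta R(t_n)=T_{n+1}-0=T_{n+1}$, hence $\Delta Y(t_n)=1-\lambda T_{n+1}$. Since $N(\cdot)$ is $\dd{F}^X$-predictable and $R(\cdot)$ is $\dd{F}^X$-adapted, and $\dd{F}^{X}\preceq \dd{F}$, conditions (B.1) and (B.2) are immediate. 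For (B.4), using (A.3) and $\dd{E}[T_{n+1}]=1/\lambda$,
\begin{equation*}
  \dd{E}\big[|\Delta Y(t_n)|\bigm|\sr{F}_{t_n-}\big]
   = \dd{E}\big[|1-\lambda T_{n+1}|\bigm|\sr{F}_{t_n-}\big]
   = \dd{E}\big[|1-\lambda T_{1}|\big] \le 2,
\end{equation*}
uniformly in $n$, and (A.4) follows from \lem{N(t) finite}.

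Next I would evaluate $D_Y(\cdot)$. The key renewal input (A.3) gives $\dd{E}[T_{n+1}\mid\sr{F}_{t_n-}]=1/\lambda$, so
\begin{equation*}
 \dd{E}[Y(t_n)\mid\sr{F}_{t_n-}]
 = \big(N(t_n-)+1\big) - \lambda\,\dd{E}[T_{n+1}\mid\sr{F}_{t_n-}]
 = N(t_n-) = Y(t_n-),
\end{equation*}
exactly as in \eq{DY 2}. Summing gives $D_Y(t)\equiv 0$, and the martingale of \eq{martingale 1} reduces to $M_Y(t)=\sum_{n=0}^{N(t)-1}\Delta Y(t_n)=\sum_{n=0}^{N(t)-1}(1-\lambda T_{n+1})$.

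Finally, under (A.1) one has $N(0)=1$ and $R(0-)=R(0)\cdot 1(N(0)=0)=0$, together with $N(0-)=0$, so $Y(0-)=0$. Plugging everything into \eq{decomposition 2} of \lem{martingale 1} yields
\begin{equation*}
  N(t)-\lambda R(t) = \lambda t + M_Y(t), \qquad t\ge 0,
\end{equation*}
which is \eq{decomposition 3a} after rearrangement, with $M(t)=M_Y(t)$ the $\dd{F}$-martingale of \eq{martingale 2}. The only substantive point — and really the only step I would expect to warrant more than a one-line justification — is the conditional-expectation calculation that produces $D_Y\equiv 0$; every other step is bookkeeping from the general setup of \sectn{deriving}.
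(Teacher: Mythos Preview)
Your proposal is correct and follows essentially the same route as the paper: choose $Y(t)=N(t)-\lambda R(t)$, verify (B.1)--(B.4) and (A.4) via \lem{N(t) finite}, use (A.3) to get $D_Y\equiv 0$ exactly as in \eq{DY 2}, note $Y(0-)=0$, and read off \eq{decomposition 3a} from \lem{martingale 1}. Your write-up is in fact slightly more explicit than the paper's (you spell out $Y'(t)=\lambda$ and $\Delta Y(t_n)=1-\lambda T_{n+1}$), but the argument is identical.
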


\begin{remark}
\label{rem:martingale 2}
{\rm $N(\cdot)$ is called a delayed renewal process when condition (A.1) is
replaced by $X(0) = (0,T_{0})$ with $T_{0} > 0$, while (A.2) and (A.3) are
unchanged. Let $Y(t) = N(t) - \lambda R(t)$ for $t \ge 0$ and $Y(0-) = R(0)$.
Then $Y(0-) = -\lambda T_0$, and
\begin{align*}
  & D_Y(t) = 0, \qquad M_Y(t) = M(t), \qquad t \ge 0.
\end{align*}
Hence, by \lem{martingale 1},
\eq{decomposition 3a} for the delayed renewal process becomes 
\begin{equation}
\label{eq:decomposition 3b}
   N(t) = \lambda \big(t+R(t) - T_0\big) + M(t), \qquad t \ge 0.
\end{equation}
In particular, if $R(t)$ is stationary, then $N(t)$ has stationary increments because
\begin{align*}
  N(t) = \sum_{0 < u \le t} 1\big(R(u-) \ne R(u)\big).
\end{align*}
Hence, \eq{decomposition 3b} and $T_0 = R(0)$ imply that $M(t)$ also has
stationary increments.
}\end{remark}  

This remark shows that the delayed renewal process only changes the
semimartingale representation \eq{decomposition 3a} by having the extra term
$-\lambda T_0$ as in \eq{decomposition 3b}. Since $T_0$ is independent of all
$T_n$s for $n \ge 1$ by (A.3), and the weak convergence of $R(t)$ to its
stationary distribution as $t \to \infty$ is a key step in our arguments, the
asymptotic results in Sections \sect{revisit} and \sect{2nd} are also valid for
the delayed renewal process if $T_0$ has an appropriate finite moment. Since
the extensions are then obvious, we do not discuss them in what follows.

\subsection{Interpretation of the semimartingale representation}
\label{sect:interpretation}
By \thr{martingale 2}, we have the semimartingale representation
\eq{decomposition 1} with
\begin{align*}
  \Lambda(t) = \lambda [t+R(t)],
\end{align*}
which differs from the compensator \eq{compensator 1}. This is not
surprising because we have made a special semimartingale not for $N(t)$ but for
$Y(t) \equiv N(t)-\lambda R(t)$ (recall the special martingale
discussed in \rem{martingale 1b}). Further, the filtration is different, and
$\Lambda(\cdot)$ is not predictable because $R(\cdot)$ need not be predictable. 
Nevertheless, \eq{decomposition 1} suggests that the asymptotics of $N(t)$
can be studied via a bias term $\lambda[t+R(t)]$ and a pure noise term $M(t)$.

Another feature of \eq{decomposition 3a} is its relation to Wald's identity.
Define $S_n = \sum_{\ell=1}^n T_{\ell}$ for $n \ge 1;$  then
$S_{N(t)} = t+R(t)$, and therefore \eq{decomposition 3a} can be written as
\begin{align}
\label{eq:Wald 1}
  S_{N(t)} - \bbE[T] N(t) = - \bbE[T] M(t), \qquad t \ge 0,
\end{align}
which immediately leads to Wald's identity, $\bbE[S_{N(t)}]=\bbE[T]\bbE[N(t)]$,
since $\bbE[M(t)] = \bbE[M(0)] = 0$.  This type of Wald's
identity is well known (see e.g.\ \cite[\S{V.6}]{Asmu2003}).

What is interesting here is that \eq{Wald 1} says more.
For example, the $\dd{F}$-martingale $-\bbE(T) M(t)$ is an error for
estimating $S_{N(t)}$ by $[\bbE(T)]N(t)$.
To evaluate this error, we use certain facts concerning the quadratic
variations of $M(\cdot)$ (see \cite{Vaar2006} for their definitions).  
\begin{lemma}
\label{lem:M2 1}
Under the assumptions of \thr{martingale 2}, the
optional quadratic variation of $M(\cdot)$ is given by
\begin{equation}
\label{eq:M2 1}
  [M](t) = \sum_{n=1}^{N(t)} (1 - \lambda T_{n})^{2},
\end{equation}
and, if\/ $\bbE(T^2) < \infty$, the predictable quadratic variation of
$M(\cdot)$ is given by
\begin{align}
\label{eq:M2 2}
 & \langle M \rangle(t) = \lambda^{2} \sigma_T^2 N(t), \qquad t \ge 0,
\end{align}
where $\sigma_{T}^{2}$ is the variance of $T$, and therefore
\begin{align}
\label{eq:E M2 2}
  \bbE[M^2(t)] = \bbE[\langle M \rangle(t)]
 \,=\, \lambda^3 \sigma_T^2 (t + \bbE[R(t)]).
\end{align}
\end{lemma}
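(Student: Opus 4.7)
The plan is to exploit the pure-jump structure of $M(\cdot)$ and apply \lem{martingale 1} twice, once to $[M](\cdot)$ and once to $M^2(\cdot)$.

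First, for \eq{M2 1}, observe from \eq{martingale 2} that $M(\cdot)$ is constant between counting times and at each $t_n$ jumps by $\Delta M(t_n) = 1 - \lambda T_{n+1}$. Having no continuous part, its optional quadratic variation is simply the sum of squared jumps,
\begin{align*}
  [M](t) = \sum_{0 \le s \le t}(\Delta M(s))^2 = \sum_{n=0}^{N(t)-1}(1 - \lambda T_{n+1})^2,
\end{align*}
which gives \eq{M2 1} after re-indexing.

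Next, for \eq{M2 2}, I would apply \lem{martingale 1} to $Y(t) := [M](t)$. Conditions (B.1)--(B.3) are immediate, and (B.4) holds with $c_0 = \lambda^2 \sigma_T^2$ because by (A.3) $T_{n+1}$ is independent of $\sr{F}_{t_n-}$ with mean $1/\lambda$, so
\begin{align*}
  \bbE\big[(1 - \lambda T_{n+1})^2 \bigm| \sr{F}_{t_n-}\big] = \var(1-\lambda T_{n+1}) = \lambda^2 \sigma_T^2.
\end{align*}
The decomposition \eq{decomposition 2} then writes $[M](t) = \lambda^2 \sigma_T^2 N(t) + M_{[M]}(t)$, with $\lambda^2 \sigma_T^2 N(t)$ $\dd{F}$-predictable (since $N(\cdot)$ is) and $M_{[M]}$ an $\dd{F}$-martingale. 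Since $\langle M \rangle$ is characterised as the dual predictable projection of $[M]$, this identifies $\langle M \rangle(t) = \lambda^2 \sigma_T^2 N(t)$.

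For \eq{E M2 2}, I would apply \lem{martingale 1} to $Y(t) = M^2(t)$, invoking the local-martingale version in \rem{martingale 1a} since (B.4) fails due to the unboundedness of $|M(t_n-)|$. Expanding the jump of $M^2$ at $t_n$ and using $\bbE[\Delta M(t_n) \mid \sr{F}_{t_n-}] = 1 - \lambda \bbE[T_{n+1}] = 0$ together with the previous step gives
\begin{align*}
  \bbE\big[\Delta(M^2)(t_n) \bigm| \sr{F}_{t_n-}\big]
  = \bbE\big[2 M(t_n-) \Delta M(t_n) + (\Delta M(t_n))^2 \bigm| \sr{F}_{t_n-}\big] = \lambda^2 \sigma_T^2,
\end{align*}
so $M^2(\cdot) - \lambda^2 \sigma_T^2 N(\cdot)$ is a local $\dd{F}$-martingale. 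Since $\bbE[\langle M\rangle(t)] = \lambda^2 \sigma_T^2 \bbE[N(t)] < \infty$ by \lem{N(t) finite}, $M(\cdot)$ is square-integrable on $[0,t]$, promoting this to a true martingale. Taking expectations yields $\bbE[M^2(t)] = \bbE[\langle M \rangle(t)] = \lambda^2 \sigma_T^2 \bbE[N(t)]$, and taking expectation in \eq{decomposition 3a} with $\bbE[M(t)] = 0$ gives $\bbE[N(t)] = \lambda(t + \bbE[R(t)])$, completing \eq{E M2 2}. The main technical subtlety is precisely this promotion from local to true martingale, which rests on the $L^1$-bound on $\langle M \rangle(t)$ obtained in the second step.
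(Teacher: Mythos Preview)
Your argument is correct and close in spirit to the paper's, but with a worthwhile twist. The paper proves \eq{M2 2} by applying \lem{martingale 1} directly to $Y(t)=M^2(t)$ and reading off $\langle M\rangle(t)=D_Y(t)=\lambda^2\sigma_T^2 N(t)$; the jump computation is identical to yours. You instead apply \lem{martingale 1} to $Y(t)=[M](t)$, where (B.4) holds cleanly with $c_0=\lambda^2\sigma_T^2$, and then invoke the characterisation of $\langle M\rangle$ as the compensator of $[M]$. This buys you a rigorous verification of (B.4), which the paper's route glosses over (as you note, $|\Delta(M^2)(t_n)|$ involves the unbounded factor $|M(t_n-)|$). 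Your third step, re-applying \lem{martingale 1} in local form to $M^2$ and then upgrading, is correct but slightly redundant: once you have identified $\langle M\rangle(t)=\lambda^2\sigma_T^2 N(t)$ with $\bbE[\langle M\rangle(t)]<\infty$, the standard square-integrable martingale identity $\bbE[M^2(t)]=\bbE[\langle M\rangle(t)]$ gives \eq{E M2 2} immediately, which is essentially how the paper concludes.
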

\begin{proof}
Since $M(t)$ is piecewise constant and discontinuous at increasing instants of
$N(t)$, \eq{M2 1} is immediate from the definition of an optional quadratic
variation (see e.g.\ \cite[Theorem 3.1]{PangTalrWhit2007}). Since the
predictable quadratic variation $\langle M\rangle(t)$ is defined as a
predictable process for $M^2(t) - \langle M\rangle(t)$ to be a martingale,
\eq{M2 2} is obtained from $\bbE(T^2) < \infty$ and \lem{martingale 1} on setting $Y(t) = M^2(t)$.
Its proof is detailed in \app{M2 2}. Finally we obtain \eq{E M2 2} from
\eq{M2 2} and (2.7). 
\end{proof}

It is notable that $N(\cdot)$ is predictable but $T_{N(\cdot)}$ is not in our
filtration $\dd{F}$, while neither of them is predictable in the filtration
$\dd{F}^{N}$ generated by $N(\cdot)$. This explains why $[M](t)$ of \eq{M2 1}
differs from $\langle M\rangle(t)$ of \eq{M2 2}.

If $\bbE[T^2] < \infty$, it follows from Lemma \ref{lem:M2 1} and the
inequality $\bbE[R(t)] \le \lambda \bbE[T^2]$ for $t \ge 0$ (see e.g.\
\cite[Proposition 6.2, p.160]{Asmu2003}) that the expected quadratic error of
\eq{Wald 1} is
\begin{equation}
\label{eq:error bound 1}
  \big(\bbE[T]\big)^2\bbE[M^2(t)] = \sigma_T^2 \lambda\big(t+\bbE[R(t)]\big)
 \le \sigma_T^2 \big(\lambda t + \lambda^2 \bbE[T^2] \big).
\end{equation}

\section{First moment asymptotics for a renewal process}
\label{sect:revisit}
\setnewcounter  
We have asserted that the semimartingale representation \eq{decomposition 3a}
can be used to find the asymptotics of a renewal process $N(t)$ for large $t$.
In this subsection, we consider them for the first moment under two scenarios
depending on the finiteness or otherwise of $\bbE[T^2]$.

\subsection{Blackwell's renewal theorem, revisited}
\label{sect:Blackwell}
The first moment asymptotic is well known as Blackwell's renewal theorem.
In view of the representation \eq{decomposition 3a}, the asymptotic behaviour
of $\bbE[N(t)]$ is determined by that of $\bbE[R(t)]$.  Taking
this into account, we reformulate Blackwell's renewal theorem as follows.

\begin{lemma}
\label{lem:Renewal 1}
For a renewal process $N(\cdot)$ satisfying assumptions {\rm (A.1)--(A.3)},
the following three conditions are equivalent.
\begin{itemize}
\item [(\sect{martingale}a)] The distribution of $T$ is non-arithmetic, i.e.\
there is no $\delta > 0$ such that $\dd{P}(T \in \{n \delta; n \ge 1\}) = 1$.
\item [(\sect{martingale}b)] Blackwell's renewal theorem holds, i.e.\ for each
$h > 0$ and $\lambda = 1/\bbE[T]$,
\begin{equation}
\label{eq:Blackwell 1}
  \lim_{t \to \infty} \bbE[N(t+h) - N(t)] = \lambda h.
\end{equation}
\item [(\sect{martingale}c)] $R(t)$ has a limiting distribution as
$t \to \infty$.
\end{itemize}
When one of these conditions holds, the limiting distribution of $R(t)$ is
given by
\begin{align}
\label{eq:R limit 1}
 \lim_{t\to\infty} \dd{P}\big(R(t) \le x\big) = \lambda \int_0^x \dd{P}(T > u)
\,\rd u.
\end{align}
\end{lemma}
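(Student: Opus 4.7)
The pivot is the expectation identity obtained from the semimartingale representation \eq{decomposition 3a}: since $\bbE[M(t)]=\bbE[M(0)]=1-\lambda\bbE[T]=0$,
\begin{equation*}
\bbE[N(t)] \,=\, \lambda t + \lambda\bbE[R(t)], \qquad
\bbE[N(t+h)-N(t)] \,=\, \lambda h + \lambda\big(\bbE[R(t+h)]-\bbE[R(t)]\big),
\end{equation*}
and $\bbE[R(t)]<\infty$ for each $t$ because $\bbE[N(t)]<\infty$ by \lem{N(t) finite}. Thus condition (b) is equivalent to $\bbE[R(t+h)]-\bbE[R(t)]\to 0$ for every $h>0$, an assertion I would try to extract from (c).

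I would establish the equivalence around the cycle $(b)\Rightarrow(a)\Rightarrow(c)\Rightarrow(b)$. For $(b)\Rightarrow(a)$, argue contrapositively: if the law of $T$ were arithmetic with span $\delta>0$ then every $t_n\in\delta\bbZ$, so taking $t=k\delta+\delta/2$ and $h\in(0,\delta/2)$ gives $N(t+h)=N(t)$ almost surely for every $k$, contradicting \eq{Blackwell 1}.

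For $(a)\Rightarrow(c)$, I would exploit that $R(\cdot)$ is a piecewise-deterministic strong Markov process, falling at unit rate and being refreshed by an independent copy of $T$ on reaching $0$. A direct check shows that the law $\pi$ with density $\lambda\dd{P}(T>x)$ is invariant for this Markov process. A Lindvall-style coupling between the given process $R$ (with $R(0)=T_1$) and an independent stationary version $R^*$ with $R^*(t)\sim\pi$ for every $t$---letting them share all inter-arrival times after their first common visit to $0$, which occurs almost surely precisely because $T$ is non-arithmetic---then produces $R(t)\Rightarrow\pi$, delivering both (c) and the explicit formula \eq{R limit 1}.

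For $(c)\Rightarrow(b)$, return to the displayed identity: given $R(t)\Rightarrow R_\infty$ with $R_\infty\sim\pi$, the task is to upgrade weak convergence to the expectation statement $\bbE[R(t+h)]-\bbE[R(t)]\to 0$. When $\bbE[T^2]<\infty$, Lorden-type bounds of the form $\sup_t\bbE[R(t)]\le \bbE[T^2]/\bbE[T]$ give uniform integrability of $\{R(t)\}_{t\ge 0}$, so $\bbE[R(t)]\to\bbE[R_\infty]=\lambda\bbE[T^2]/2$ and the required difference vanishes. When $\bbE[T^2]=\infty$, I would truncate at a level $K$: the bounded variables $R(t)\wedge K$ converge in expectation to $\bbE[R_\infty\wedge K]$, yielding $\bbE[R(t+h)\wedge K]-\bbE[R(t)\wedge K]\to 0$, while the tail contribution $\bbE[(R(t)-K)_+]$ is controlled uniformly in $t$ by a renewal-equation estimate for $\dd{P}(R(t)>x)$ and made small by taking $K$ large. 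The principal obstacle is the coupling step in $(a)\Rightarrow(c)$, which is the heart of Blackwell's theorem and the only place non-arithmeticity actually enters; a secondary hurdle is the $\bbE[T^2]=\infty$ subcase of $(c)\Rightarrow(b)$, where weak convergence alone does not yield the needed moment convergence and the tail must be argued separately.
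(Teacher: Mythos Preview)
Your cycle $(b)\Rightarrow(a)\Rightarrow(c)\Rightarrow(b)$ differs from the paper's $(a)\Rightarrow(b)\Rightarrow(c)\Rightarrow(a)$, but that is not a problem: the paper simply cites Blackwell's theorem for $(a)\Rightarrow(b)$ and the key renewal theorem for $(b)\Rightarrow(c)$, whereas you sketch a Lindvall coupling for $(a)\Rightarrow(c)$, which is a legitimate alternative and in fact closer to a self-contained proof. The contrapositive steps $(b)\Rightarrow(a)$ and $(c)\Rightarrow(a)$ are essentially the same observation.

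The genuine gap is your $(c)\Rightarrow(b)$ argument when $\bbE[T^2]=\infty$. You claim that the tail $\bbE[(R(t)-K)_+]$ can be made small \emph{uniformly in $t$} by taking $K$ large, but this is false: under (c) one has $\bbE[R(t)]\to\infty$ whenever $\bbE[T^2]=\infty$ (indeed the paper records $\bbE[R(t)]\sim\lambda\int_0^t\int_u^\infty\dd{P}(T>x)\,\rd x\,\rd u$, which diverges), so for any fixed $K$ the tail $\bbE[(R(t)-K)_+]\ge\bbE[R(t)]-K\to\infty$. Thus the splitting $\bbE[R(t)]=\bbE[R(t)\wedge K]+\bbE[(R(t)-K)_+]$ cannot isolate a vanishing remainder, and weak convergence of $R(t)$ alone does not force $\bbE[R(t+h)]-\bbE[R(t)]\to 0$ via your route.

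The paper circumvents this by \emph{building the truncation into the semimartingale itself}: applying \lem{martingale 1} to $Y(t)=N(t)-\lambda_v\big(v\wedge R(t)\big)$ with $\lambda_v=1/\bbE[v\wedge T]$ yields
\[
\bbE[N(t)]=\lambda_v\int_0^t\dd{P}\big(R(u)<v\big)\,\rd u+\lambda_v\,\bbE[v\wedge R(t)],
\]
so that only the \emph{bounded} quantities $\dd{P}(R(u)<v)$ and $\bbE[v\wedge R(t)]$ appear. Condition (c) then gives convergence of each term directly, and differencing produces \eq{Blackwell 1} without any appeal to tail control. This truncated representation, rather than a post-hoc truncation of $R(t)$ in the untruncated identity, is the missing idea.
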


\begin{remark}
\label{rem:Renewal 1}
{\rm Following equation \eq{residual 1} below, there is a direct
proof that (\sect{martingale}c) implies (\sect{martingale}b), and hence of
their equivalence in the delayed case when the non-arithmetic condition in
(\sect{martingale}a) may not be necessary for $R(t)$ to have a limiting
distribution in (\sect{martingale}c).
}\end{remark}

\begin{proof}
This lemma owes its proof to Feller's key renewal theorem  \cite[Chapter XI]{Fell1971}.
By Blackwell's renewal theorem, which is a special case of Feller's key renewal
theorem, (\sect{martingale}a) implies (\sect{martingale}b).  By Feller's direct
Riemann integrability argument, (\sect{martingale}b) implies
(\sect{martingale}c) and \eq{R limit 1} (see \cite[\S{XI.4}]{Fell1971}).
Finally, (\sect{martingale}c) implies (\sect{martingale}a) because
(\sect{martingale}c) does not hold if (\sect{martingale}a) does not hold,
equivalently, $F$ is arithmetic.
\end{proof}

Up to this point, the asymptotic behaviour of $\bbE[N(t)]$ provided by
\lem{Renewal 1} has nothing to do with the semimartingale representation
\eq{decomposition 3a}.  However, when we look at the problem from a sample
path viewpoint, \eq{decomposition 3a} can be considered as a pre-limit renewal
theorem. Taking its expectation, we have
\begin{equation}
\label{eq:identity 1}
  \bbE[N(t)] = \lambda t + \lambda \bbE[R(t)], \qquad t \ge 0,
\end{equation}
which is equivalent to Wald's identity as discussed in \sectn{interpretation}. 
It is of interest here to see how Blackwell's renewal theorem \eq{Blackwell 1}
can be obtained directly from \eq{identity 1} which provides information on
$R(t)$, namely, \eq{Blackwell 1} holds if and only if
\begin{equation}
\label{eq:residual 1}
  \lim_{t \to \infty} \bbE[R(t+h) - R(t)] = 0.
\end{equation}
If $\bbE[T^2] < \infty$, then \eq{residual 1} is immediate from
(\sect{martingale}c) because $\bbE[R(t)]$ converges to
$\frac12\lambda\bbE(T^2)$ as $t\to\infty$. However, this argument does not
apply when $\bbE[T^2] = \infty$. Nevertheless, \eq{residual 1}, equivalently,
(\sect{martingale}b), is still obtained directly from (\sect{martingale}c)
using another semimartingale representation of $N(t)$ as we now show.

\begin{proof}[Direct proof of (\sect{martingale}b) and \eq{R limit 1} from
(\sect{martingale}c)] Let $\widetilde{R}$ be a random variable distributed the
same as the limit distribution of $R(t)$.  Apply \lem{martingale 1} with 
$Y(t)=N(t)-\lambda_v \big(v \wedge R(t)\big)$ for each fixed
$v \in C_{\widetilde{R}}$, where $a\wedge b =\min(a,b)$ for $a,b\in\bbR$,
$\lambda_v = 1/\bbE[v\wedge T]$, and $C_{\widetilde{R}}$ is the set of all
continuity points of the distribution of $\widetilde{R}$.  Similarly to
\eq{DY 2}, we can check that $D_Y(t) = 0$
and $Y'(t) = \lambda_v 1\big(R(t) < v\big)$. Hence,
\begin{equation*}
  M_v(t) := \sum_{n=1}^{N(t)} \big(1 - \lambda_v (v \wedge T_n)\big),
\end{equation*}
is an $\dd{F}$-martingale for the filtration $\dd{F}$ such that $\dd{F}^{X} \preceq \dd{F}$, and, for $v > 0$, 
\begin{equation}
\label{eq:truncated 3}
  N(t) = \lambda_v \int_0^t 1\big(R(u)<v\big)\,\rd u 
  +\lambda_v \big(v\wedge R(t)\big)+M_v(t).
\end{equation}
Taking expectations in \eq{truncated 3} yields
\begin{equation}
\label{eq:truncated 1}
  \bbE[N(t)] = \lambda_v \int_0^t \bbP\big(R(u) < v\big) \,\rd u 
  + \lambda_v \bbE[v \wedge R(t)], \qquad t \ge 0,
\end{equation}
and therefore, if (\sect{martingale}c) holds, then, as $u \to \infty$,
$\bbP\big(R(u) < v\big)$ converges to $\bbP(\widetilde{R} < v)$, which equals
$\bbP(\widetilde{R} \le v)$ at continuity points
$v \in C_{\widetilde{R}}$, so for such $v$,
\begin{equation}
\label{eq:lambda 0}
  \lim_{t\to\infty} \bbE[N(t)]\big/t = \lambda_v \bbP(\widetilde{R} \le v).
\end{equation}
Since the left-hand side of this equation is independent of $v$, we have
$\lambda_{v} \bbP(\widetilde{R} \le v) = \lambda^{*}$ for some $\lambda^{*}$
and for all $v \in C_{\widetilde{R}}$. Hence, we have
\begin{align}
\label{eq:limiting R 1}
  \bbP(\widetilde{R} \le v) = \frac {\lambda^{*}} {\lambda_v}
  = \lambda^{*} \bbE[v \wedge T] 
  = \lambda^{*} \bbE\left[\int_0^v 1(T > x)\,\rd x\right],
\end{align}
and this equality holds for all $v \ge 0$ because the right-hand side is
continuous in $v$.  Letting $v\to\infty$ in \eq{limiting R 1}, shows that
$1 = \lambda^{*} \bbE[T]$. Hence, $\lambda^{*} = 1/\bbE[T] = \lambda$.
This and \eq{limiting R 1} yield \eq{R limit 1}. It follows from
\eq{truncated 1} and \eq{limiting R 1} that, for each $h > 0$ and
$v \in C_{\widetilde{R}}$,
\begin{equation}
\label{eq:truncated 2}
  \lim_{t\to\infty} \big(\bbE[N(t+h)] - \bbE[N(t)]\big) 
  = \lambda_v \lim_{t\to\infty}
    \int_t^{t+h} \bbP\big(R(u) < v\big)\,\rd u = \lambda h,
\end{equation}
because $v \wedge R(t)$ is bounded by $v$. Thus, (\sect{martingale}c) implies
(\sect{martingale}b) and \eq{R limit 1}.
\end{proof}

The truncation technique used in this proof is useful for more general counting
processes as we show in \sectn{extension}.

\subsection{Infinite second moment case}
\label{sect:infinite}

When $\bbE[T^2] = \infty$, it is of interest to consider a refinement of the
elementary renewal theorem $\bbE[N(t)] - \lambda t = o(t)$.  Sgibnev
\cite{Sgib1981} studied this problem, starting with the case of an
arithmetic lifetime distribution. Here we consider it through the asymptotic
behaviour of $\bbE[R(t)]$ in \eq{identity 1}.
Recall first that, for some function $z(\cdot): \bbR_+ \mapsto \bbR$, called a
generator of $Z(\cdot)$, a solution $Z$ of the general renewal equation of
\citet{Fell1971},
\begin{align}
\label{eq:Z 1}
  Z(t) = z(t) + \int_0^t Z(t-u) \,F(\rd u), \qquad t \ge 0,
\end{align}
is given by
\begin{equation*}
  Z(t) = \bbE \bigg[ \int_{0}^{t} z(t-u) \,N(\rd u) \bigg].
\end{equation*}
We exhibit $\bbE[R(t)]$ as a solution of the general renewal equation.
From \eq{Rdef}, when $t_{n-1} \le t < t_n$,
$R(t) = T_n - (t-t_{n-1})$ so choosing
\begin{align}
\label{eq:z 1}
  z(t) &= \bbE\big[(T - t)\,1(T>t)\big],
\end{align}
and noting the fact that $t_{n-1}$ and $T_{n}$ are independent, we have
\begin{align}
\label{eq:gre R 1}
  \bbE[R(t)] & = \bbE\bigg[ \sum_{n=1}^\infty \big(T_n - (t-t_{n-1})\big)
  \,1(t_{n-1} \le t < t_{n-1} + T_n) \bigg] \nonumber\\
 & = \bbE\bigg[\sum_{n=1}^\infty \dd{E}\big[\big(T_n - (t-t_{n-1})\big)
1(t_{n-1} \le t < t_{n-1} + T_n)\,\bigm|\, t_{n-1} \big] \bigg] \nonumber\\
 &= \bbE\bigg[ \int_0^t z(t-u) \,N(\rd u) \bigg].
\end{align}
Thus, $z(\cdot)$ is indeed a generator for $\bbE[R(t)]$. To check the
asymptotic behaviour of \eq{gre R 1}, the following lemma is useful (see also
\cite{Sgib1981} or \cite[Exercise 4.4.5(c)]{DaleVere1988}); in the lemma and
elsewhere, $f(t) \sim g(t)$ for functions $f, g : \bbR_+ \mapsto \bbR$ means
$\lim_{t \to \infty} f(t)/g(t) = 1$.

\begin{lemma}
\label{lem:Sgib1981}
{\rm(Sgibnev \cite[Theorem 4]{Sgib1981})}
If in $\eq{Z 1}$ the generator $z(t)$ is non-negative and non-increasing in
$t \ge 0$, then the solution $Z(\cdot)$ satisfies
\begin{equation}
\label{eq:Z 2}
  Z(t) \sim \lambda \int_0^t z(u) \,\rd u.
\end{equation}
\end{lemma}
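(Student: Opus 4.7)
The plan is to combine Blackwell's renewal theorem with a step-function sandwich that exploits the monotonicity of $z$. Writing the solution as the convolution $Z(t)=\int_{[0,t]} z(t-u)\,U(\rd u)$, where $U(t):=\dd{E}[N(t)]$ is the renewal function (extended by $0$ on $(-\infty,0)$), and changing variables to $v=t-u$ reduces the problem to analysing $\int_0^t z(v)\,\widetilde{U}_t(\rd v)$ with $\widetilde{U}_t([a,a+\delta])=U(t-a)-U(t-a-\delta)$; Blackwell's theorem provides $\widetilde{U}_t([a,a+\delta])\to\lambda\delta$ as $t\to\infty$ for each fixed $a,\delta>0$. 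If $\int_0^\infty z(u)\,\rd u<\infty$, monotonicity makes $z$ directly Riemann integrable, so Smith's/Feller's key renewal theorem together with $\int_0^t z\to\int_0^\infty z$ yields the ratio $\to\lambda$ immediately. The nontrivial case is $\int_0^\infty z=\infty$; and after dispatching the subcase $z(\infty)>0$ (where $\int_0^t z\ge t\,z(\infty)$ and the elementary renewal theorem $U(t)\sim\lambda t$ suffice), we may assume $z(\infty)=0$.

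For this case, fix $\delta>0$, set $a_k:=z(k\delta)$, and define step-function generators $z^{-}_\delta(u)=a_{k+1}$ and $z^{+}_\delta(u)=a_k$ on $[k\delta,(k+1)\delta)$, so that $z^{-}_\delta\le z\le z^{+}_\delta$. The associated renewal-equation solutions satisfy $Z^{-}_\delta\le Z\le Z^{+}_\delta$, while $\int_0^t z^{\pm}_\delta$ differ from $\int_0^t z$ by at most $\delta\,z(0)$, negligible against the divergent integral. The step-function solutions telescope:
\begin{equation*}
  Z^{+}_\delta(t) \;=\; \sum_{k=0}^{\lfloor t/\delta\rfloor} a_k \bigl[U(t-k\delta) - U(t-(k+1)\delta)\bigr],
\end{equation*}
and similarly for $Z^{-}_\delta(t)$. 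It therefore suffices to show $Z^{\pm}_\delta(t)/\bigl(\delta\sum_k a_k\bigr)\to\lambda$; the sandwich then transfers this limit to $Z(t)/\int_0^t z(u)\,\rd u$.

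The main obstacle is that both the sum $\sum_k a_k$ and $\int_0^t z$ diverge, so one cannot simply exchange limit with summation after applying Blackwell term by term. The standard remedy is a cutoff $K=K(t)\to\infty$: for $k\le K(t)$, Blackwell's theorem provides $U(t-k\delta)-U(t-(k+1)\delta)\to\lambda\delta$, and $K(t)$ is chosen to grow slowly enough that $t-K(t)\delta\to\infty$ and the convergence is uniform in $k\le K(t)$; for $k>K(t)$, the uniform increment bound $U(s+\delta)-U(s)\le C_\delta$ (a consequence of the elementary renewal theorem and standard subadditivity-type estimates on $U$), combined with $a_k\le a_{K(t)}\to 0$ via $z(\infty)=0$, shows the tail contribution is $o\bigl(\sum_{k\le t/\delta} a_k\bigr)$. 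An Abel-summation argument using the monotonicity of $a_k$ then delivers the equivalence $Z^{\pm}_\delta(t)\sim\lambda\delta\sum_{k\le t/\delta} a_k\sim\lambda\int_0^t z(u)\,\rd u$, completing the proof. The delicate balancing of $K(t)$ against the rate in Blackwell's theorem, especially in the absence of any second-moment assumption on $F$, is the step most likely to require care.
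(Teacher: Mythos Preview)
The paper does not prove this lemma; it simply quotes it from Sgibnev and uses it. Your step-function sandwich combined with Blackwell's theorem is a sound route and yields a valid proof in the non-arithmetic setting, but two points need sharpening.

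First, your framing of the cutoff $K(t)$ is backwards, and the gap you flag as ``delicate'' is real for the choice you describe. You propose letting $K(t)$ ``grow slowly enough that $t-K(t)\delta\to\infty$'' and then bound the tail by $a_{K(t)}\,C_\delta\,(N-K(t))$ with $N=\lfloor t/\delta\rfloor$. For slowly growing $K(t)$ this bound need \emph{not} be $o\bigl(\sum_{k\le N}a_k\bigr)$: take $a_k=1/k$ and $K(t)=\sqrt{N}$, so the bound is of order $\sqrt{N}$ while $\sum a_k\sim\log N$. The fix is to take $K(t)$ as large as possible: given $\epsilon>0$, Blackwell's theorem gives a single threshold $S_\epsilon$ with $|U(s+\delta)-U(s)-\lambda\delta|<\epsilon$ for all $s>S_\epsilon$, so choosing $K(t)=\lfloor(t-S_\epsilon)/\delta\rfloor-1$ leaves at most $S_\epsilon/\delta+2$ tail terms, a number independent of $t$. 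The tail is then bounded by a constant times $a_{K(t)}\to0$, which is trivially $o\bigl(\sum_{k\le N}a_k\bigr)$ since the latter diverges; the head contributes $(\lambda\delta\pm\epsilon)\sum_{k\le K(t)}a_k$, and letting $\epsilon\downarrow0$ closes the sandwich. No balancing is required.

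Second, the subcase $z(\infty)=c>0$ is dismissed too quickly: knowing only $\int_0^t z\ge ct$ and $U(t)\sim\lambda t$ does not by itself pin down $Z(t)\big/\bigl(\lambda\int_0^t z\bigr)$. The clean reduction is to write $z=c+(z-c)$, observe that $z-c$ is non-negative, non-increasing and vanishes at infinity, apply the already-handled cases to $z-c$, and add the contribution $c\,U(t)\sim\lambda ct$.

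Finally, because your argument invokes Blackwell's theorem it implicitly assumes $F$ is non-arithmetic. Sgibnev's theorem as cited holds without that restriction, so your argument proves the lemma only in the non-arithmetic case---which is, however, all the paper actually uses.
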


It follows from \thr{martingale 2} that \eq{z 1}, \eq{gre R 1}
and \lem{Sgib1981} yield
\begin{align}
\label{eq:gre R 2}
  \bbE[N(t)] - \lambda t  = \lambda \bbE[R(t)] 
  & \sim \lambda \int_0^t \int_u^\infty \bbP(T>x) \,\rd x \,\rd u
\end{align}
as shown in \cite[Theorem 5]{Sgib1981}. The asymptotic behaviour of
\eq{gre R 2} may be viewed as a doubly integrated tail of the distribution $F$
of $T$ (see e.g.\ \cite{FossKorsZach2011} for an integrated tail).

In this section, we have observed how the semimartingale representations
\eq{decomposition 3a} and \eq{truncated 3} are helpful in elucidating the
asymptotic behaviour of $\dd{E}[N(t)]$. One may wonder how the present approach
might work for the asymptotic behaviour of higher moments of $N(t)$.
This is considered in \sectn{2nd}. 

It is also of interest to see how the approach works for more general
counting processes. Observe that \eq{decomposition 3a} holds if $D_{Y}(t)$ of
\eq{DY 1} vanishes, for which $N(t)$ need not necessarily be a renewal
process --- we discuss this extension in \sectn{extension} where the
exposition is independent of the results in \sectn{2nd}.

\section{Second moment asymptotics}
\label{sect:2nd}
\setnewcounter

We consider the variance of the renewal process $N(t)$, denoted $\var N(t)$.
As shown below, the representation of Theorem 2.1
gives an alternative path for studying the
asymptotic behaviour of $\var N(t)$. In particular, the martingale $M(t)$ plays
an important role in this case, and this contrasts with the first moment case.

Begin by using \eq{decomposition 3a} with $\bbE[M(t)] = 0$ to compute
$\var N(t)$ in the form
\begin{equation}
\label{eq:variance 1}
\var N(t) = \lambda^2 \var R(t) + 2 \lambda \bbE[R(t) M(t)] + \bbE[M^2(t)].
\end{equation}
From \lem{M2 1} we know that when $\bbE[T^2]$ is finite, 
$\bbE[M^2(t)] \sim \lambda^3 \sigma^2_T t$.  We therefore assume that
$\bbE[T^2] < \infty$ because otherwise $\var N(t)$ is not finite.  To study the
asymptotic behaviour of $\var R(t)$, we consider 
$\bbE[R^2(t)]$; this function is the solution of the general renewal equation
\eq{Z 1} with the generator (cf.\ around \eq{z 1} above)
\begin{equation}
\label{eq:z 2}
  z(t) = z_2(t) := \bbE[(T-t)^2\,1(T>t)] = \int_t^\infty 2x\,\bbP(T>x)\,\rd x.
\end{equation}
Let
\begin{equation}
\label{eq:h 2}
  h(t) = \int_0^t z_2(u) \,\rd u
 = t \bbE[(T-t)T\,1(T>t)] + \tfrac 13 \bbE[(T\wedge t)^3].
\end{equation}
Then \lem{Sgib1981} and $\bbE[T^2] < \infty$ yield, for $t \to \infty$,
\begin{equation}
\label{eq:R2 1}
  \bbE[R^2(t)] \sim \lambda h(t) \sim
\begin{cases}
\lambda t z_2(t) = o(t), &  \mbox{ if } \dd{E}[T^{3}] = \infty, \\
\frac 13 \lambda \dd{E}[T^{3}],  &   \mbox{ if } \dd{E}[T^{3}] < \infty.
\end{cases}
\end{equation}
Thus, $\bbE[R^2(t)] = o(t)$. On the other hand, by the Cauchy--Schwarz
inequality,
\begin{equation}
\label{eq:RM 1}
  \big|\bbE\big[R(t) M(t)\big]\big| \le \sqrt{\bbE[R^2(t)] \,\bbE[M^2(t)]}
\sim \lambda^2 \sqrt{t \sigma_T^2 h(t)} .
\end{equation}
Hence, because $\bbE[T^2] < \infty$, the relations \eq{variance 1} and
$h(t) = o(t)$ yield the known result (e.g.\ \cite[\S2]{Dale2016})
\begin{equation}
\label{eq:variance 0a}
  \var N(t) = \lambda^3 \sigma_T^2 t + o(t), \qquad t \to \infty.
\end{equation}
We can refine \eq{variance 0a} by evaluating $\bbE\big[R(t) M(t)\big]$,
namely, using \eq{R2 1} and \eq{RM 1}, we have the next result.
\begin{proposition}
\label{pro:R2 2}
Let $N$ be a renewal process for which {\rm(A.1)--(A.3)} hold. 
If\/ $\bbE[T^2] < \infty$ and $\bbE[T^3] = \infty$, then with $z_2$ defined by
$\eq{z 2}$, the relation $\eq{variance 0a}$ is tightened to
\begin{align}
\label{eq:variance 2}
    \var N(t) - \lambda^3 \sigma_T^2 t = O\big(t\sqrt{z_2(t)}\big).
\end{align}
\end{proposition}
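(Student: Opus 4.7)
The plan is to chase through the variance decomposition \eq{variance 1} and show that each of the three resulting terms is either the leading order $\lambda^3 \sigma_T^2 t$ or falls within the error $O(t\sqrt{z_2(t)})$. All the analytic work has essentially been done in the preceding display lines; what remains is to verify that the $RM$ cross-term is the dominant error and that the other two pieces are negligible compared to it.

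First, I would isolate the leading order from $\bbE[M^2(t)]$. By \eq{E M2 2},
\begin{equation*}
 \bbE[M^2(t)] - \lambda^3 \sigma_T^2 t = \lambda^3 \sigma_T^2\, \bbE[R(t)],
\end{equation*}
and since $\bbE[T^2] < \infty$, we have $\bbE[R(t)] \le \lambda \bbE[T^2] < \infty$ uniformly in $t$ (as noted right after \eq{error bound 1}), so this contribution is merely $O(1)$.

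Next I would bound $\lambda^2 \var R(t) \le \lambda^2 \bbE[R^2(t)]$ using \eq{R2 1}. In the regime $\bbE[T^3] = \infty$ this gives $\lambda^2 \var R(t) = O(t\, z_2(t))$. The key observation, which I expect to be the mild technical point of the argument, is that $z_2(t) \to 0$ as $t\to\infty$ because $\bbE[T^2] < \infty$ forces $\int_t^\infty 2x \,\bbP(T>x)\,\rd u \to 0$. Consequently $t z_2(t) = t\sqrt{z_2(t)}\cdot \sqrt{z_2(t)} = o\bigl(t\sqrt{z_2(t)}\bigr)$, so the $\var R(t)$ contribution is absorbed into the claimed error.

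Finally, the cross-term is controlled via \eq{RM 1}: since $h(t) \sim t z_2(t)$ by \eq{R2 1},
\begin{equation*}
 2\lambda\,\bigl|\bbE[R(t) M(t)]\bigr| \le 2\lambda^3 \sqrt{t\sigma_T^2 h(t)}\,\sim\, 2\lambda^3 \sigma_T\, t\sqrt{z_2(t)},
\end{equation*}
which is exactly of order $t\sqrt{z_2(t)}$. Combining all three bounds in \eq{variance 1} yields \eq{variance 2}. The only real obstacle is making sure the asymptotic equivalences from \eq{R2 1} are used consistently as genuine $O$-bounds and that the monotone decay of $z_2$ (hence boundedness of $\sqrt{z_2(t)}$) is invoked to absorb the $\var R(t)$ piece into the $RM$ piece; everything else is direct substitution.
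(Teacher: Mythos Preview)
Your proposal is correct and follows exactly the route the paper takes: the paper's ``proof'' is just the sentence ``using \eq{R2 1} and \eq{RM 1}, we have the next result'' together with the displayed bounds preceding the proposition, and you have simply spelled out how the three pieces of \eq{variance 1} combine. The one point you leave implicit is that the $O(1)$ contribution from $\bbE[M^2(t)]-\lambda^3\sigma_T^2 t$ is indeed absorbed into $O\big(t\sqrt{z_2(t)}\big)$; this holds because $\bbE[T^3]=\infty$ forces $h(t)\to\infty$ and hence, via $h(t)\sim t z_2(t)$, also $t\sqrt{z_2(t)}\to\infty$.
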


Now consider the case that $\bbE[T^3] < \infty$. Then from \eq{R2 1}, 
\begin{equation}
\label{eq:R2 asym 2}
  \lim_{t\to\infty} \var R(t)
  = \lim_{t\to\infty} h(t) - \lim_{t\to\infty}(\bbE[R(t)])^2 
  = \tfrac 13 \lambda \bbE[T^3] - \tfrac 14 \lambda^2 \big(\bbE[T^2]\big)^2.
\end{equation}
We want to find the asymptotic behaviour of $\bbE\big[R(t) M(t)\big]$,
but to do so we need an extra condition,
{\parindent 1 pt
\begin{equation}
\label{eq:directly 1}
\mbox{(4a)} \qquad\qquad\quad \bbE[R(t)] - C \mbox{ is directly Riemann integrable on } [0,\infty),
\qquad\qquad\quad
\end{equation}

} 
\noindent
where $C = \tfrac12 \lambda \bbE[T^2] = \lim_{t\to\infty} \bbE[R(t)]$.
Then the following holds (the proof is
given in Appendix A.2).
\begin{lemma}
\label{lem:RM 1}
Assume that conditions {\rm(A.1)--(A.3)} and (4a) hold.
Then when $\bbE[T^3] < \infty$,
\begin{equation}
\label{eq:RM 3}
  \lim_{t\to\infty} \bbE\big[R(t) M(t)\big] = \tfrac 12 (\lambda \bbE[T^2] 
  - \lambda^2 \bbE[T^3]) + \tfrac 12 \lambda^3 \sigma_T^2 \bbE[T^2].
\end{equation}
\end{lemma}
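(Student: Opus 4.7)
The plan is to derive a renewal equation for $\phi(t):=\bbE[R(t)M(t)]$ by first-step analysis and then pass to the limit using the key renewal theorem. Conditioning on $T_{1}$: if $T_{1}>t$ then $N(t)=1$, so $R(t)=T_{1}-t$ and $M(t)=1-\lambda T_{1}$; if $T_{1}\le t$, regeneration implies that $(R(T_{1}+s),M(T_{1}+s)-(1-\lambda T_{1}))_{s\ge 0}$ has the law of $(R(s),M(s))_{s\ge 0}$ and is independent of $T_{1}$. This yields
\begin{equation*}
  \phi(t)=z(t)+\int_{0}^{t}\phi(t-u)\,F(\rd u), \qquad z=z_{1}+z_{2},
\end{equation*}
with $z_{1}(t)=\bbE[(T-t)(1-\lambda T)\,1(T>t)]$ and $z_{2}(t)=\int_{0}^{t}(1-\lambda u)\,r(t-u)\,F(\rd u)$, where $r(u):=\bbE[R(u)]$. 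Writing $r=C+\tilde r$ with $C=\tfrac12\lambda\bbE[T^{2}]$ splits $z_{2}=z_{2,C}+z_{2,\tilde r}$, where $z_{2,C}(t):=C\int_{0}^{t}(1-\lambda u)F(\rd u)$ and $z_{2,\tilde r}(t):=(\tilde r\ast G)(t)$ with $G(\rd u):=(1-\lambda u)F(\rd u)$. The key algebraic fact is that $G$ is a finite signed measure with total mass $G([0,\infty))=1-\lambda\bbE[T]=0$.

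The pieces $z_{1}$ and $z_{2,C}$ are directly Riemann integrable (d.R.i.) by standard arguments: $z_{1}$ is continuous and dominated by the non-increasing integrable function $\bbE[T(1+\lambda T)\,1(T>t)]$ (finite since $\bbE[T^{3}]<\infty$), and a Fubini calculation gives $\int_{0}^{\infty}z_{1}=\tfrac12\bbE[T^{2}]-\tfrac12\lambda\bbE[T^{3}]$; the identity $z_{2,C}(t)=C\lambda\bbE[T\,1(T>t)]-C\bbP(T>t)$ exhibits $z_{2,C}$ as the difference of two non-increasing non-negative integrable functions, hence d.R.i., with $\int_{0}^{\infty}z_{2,C}=\tfrac12\lambda^{2}(\bbE[T^{2}])^{2}-\tfrac12\bbE[T^{2}]$.

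The main obstacle is $z_{2,\tilde r}$: even though $\tilde r$ is d.R.i.\ by hypothesis (4a), the convolution $\tilde r\ast G$ need not be d.R.i. I would circumvent this by working with the explicit solution $\phi=U\ast z$, where $U=\sum_{n\ge 0}F^{\ast n}$ is the renewal measure, and exploiting associativity: $U\ast z_{2,\tilde r}=(U\ast\tilde r)\ast G$. The key renewal theorem applied to the d.R.i.\ function $\tilde r$ gives $(U\ast\tilde r)(s)\to L:=\lambda\int_{0}^{\infty}\tilde r(u)\,\rd u$ as $s\to\infty$, so $U\ast\tilde r$ is bounded on $[0,\infty)$. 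Since $|G|([0,\infty))\le 1+\lambda\bbE[T]<\infty$, dominated convergence (applied to the Jordan parts of $G$) gives $((U\ast\tilde r)\ast G)(t)\to L\cdot G([0,\infty))=0$. Combining with the key renewal theorem applied to the d.R.i.\ pieces,
\begin{equation*}
  \lim_{t\to\infty}\phi(t) =\lambda\bigl[\tfrac12\bbE[T^{2}]-\tfrac12\lambda\bbE[T^{3}]+\tfrac12\lambda^{2}(\bbE[T^{2}])^{2}-\tfrac12\bbE[T^{2}]\bigr]=\tfrac12\lambda^{2}\bigl(\lambda(\bbE[T^{2}])^{2}-\bbE[T^{3}]\bigr),
\end{equation*}
which is algebraically identical to the right-hand side of \eq{RM 3} after substituting $\sigma_{T}^{2}=\bbE[T^{2}]-\lambda^{-2}$.
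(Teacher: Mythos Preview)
Your argument is correct, and its overall architecture coincides with the paper's: both routes reduce $\bbE[R(t)M(t)]$ to a renewal-equation solution $U\ast z$ with exactly the same three-piece decomposition of the generator --- your $z_{1}$, $z_{2,C}$, $z_{2,\tilde r}$ are precisely the paper's $z_{3}$, $w_{2}$, $w_{1}$. The paper reaches the renewal equation by splitting $R(t)M(t)$ into the last-summand contribution $Z_{1}$ and the remainder $Z_{2}$ and then expanding; you reach it by first-step conditioning on $T_{1}$, which is cleaner but leads to the same place. The genuine difference is in how the awkward piece $z_{2,\tilde r}=w_{1}$ is handled: the paper verifies direct Riemann integrability of $w_{1}$ via a hands-on argument (bounding sup/inf over mesh intervals, using boundedness of $g=C-r$ and condition (4a)), and then computes $\int_{0}^{\infty}w_{1}=0$; you instead bypass d.R.i.\ entirely by writing $U\ast z_{2,\tilde r}=(U\ast\tilde r)\ast G$, invoking the key renewal theorem on the d.R.i.\ function $\tilde r$ (this is where (4a) enters for you), and then using that $G$ has total mass zero so the limit vanishes by dominated convergence. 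Your device is more conceptual and sidesteps the technical d.R.i.\ verification; the paper's route is more self-contained but heavier. One small point worth stating explicitly in your write-up: the boundedness of $U\ast\tilde r$ on $[0,\infty)$ follows because it converges at infinity (key renewal theorem) and on compacta is bounded by $\|\tilde r\|_{\infty}\,U([0,t])<\infty$.
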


From this lemma and \eq{R2 asym 2}, equation \eq{variance 1} now yields
\eq{variance 4}.
\begin{proposition}
\label{lem:variance 4}
Assume that conditions {\rm(A.1)--(A.3)} and (4a) hold.  
Then if\/ $\bbE[T^3] < \infty$,
\begin{equation}
\label{eq:variance 4}
\var N(t) - \lambda^3 \sigma_T^2 t = - \tfrac23 \lambda^3 \bbE[T^3] + \tfrac 54
\lambda^4 \big(\bbE[T^2]\big)^2 - \tfrac 12 \lambda^2 \bbE[T^2] + o(1),
\quad (t\to\infty).
\end{equation}
\end{proposition}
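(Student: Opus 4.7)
The plan is to combine the variance decomposition \eq{variance 1} with the three asymptotic ingredients already at our disposal: (i) the martingale second-moment formula \eq{E M2 2}, (ii) the limit \eq{R2 asym 2} for $\var R(t)$, and (iii) the limit identified in \lem{RM 1} for $\bbE[R(t)M(t)]$. Once these three are in hand, obtaining \eq{variance 4} is bookkeeping.

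Concretely, I would proceed in three steps. First, from \eq{E M2 2} I would write $\bbE[M^2(t)] = \lambda^3 \sigma_T^2 t + \lambda^3 \sigma_T^2 \bbE[R(t)]$ and use the fact that $\bbE[R(t)] \to C := \tfrac12 \lambda \bbE[T^2]$ (which is guaranteed by (4a), since a directly Riemann integrable function on $[0,\infty)$ vanishes at infinity). This yields $\bbE[M^2(t)] = \lambda^3 \sigma_T^2 t + \tfrac12 \lambda^4 \sigma_T^2 \bbE[T^2] + o(1)$. Second, multiplying \eq{R2 asym 2} by $\lambda^2$ gives $\lambda^2 \var R(t) \to \tfrac13 \lambda^3 \bbE[T^3] - \tfrac14 \lambda^4 (\bbE[T^2])^2$. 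Third, multiplying the limit in \lem{RM 1} by $2\lambda$ gives $2\lambda \bbE[R(t)M(t)] \to \lambda^2 \bbE[T^2] - \lambda^3 \bbE[T^3] + \lambda^4 \sigma_T^2 \bbE[T^2]$.

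Summing these three limits and subtracting the linear term $\lambda^3 \sigma_T^2 t$ leaves a constant involving $\lambda^3 \bbE[T^3]$, $\lambda^4 (\bbE[T^2])^2$, and $\lambda^4 \sigma_T^2 \bbE[T^2]$, with coefficients $-\tfrac23$, $-\tfrac14$, and $\tfrac32$ respectively (the last coming from $\tfrac12 + 1$). Expanding $\sigma_T^2 = \bbE[T^2] - \lambda^{-2}$ converts $\tfrac32 \lambda^4 \sigma_T^2 \bbE[T^2]$ into $\tfrac32 \lambda^4 (\bbE[T^2])^2 - \tfrac32 \lambda^2 \bbE[T^2]$. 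Collecting like terms then produces exactly $-\tfrac23 \lambda^3 \bbE[T^3] + \tfrac54 \lambda^4 (\bbE[T^2])^2 - \tfrac12 \lambda^2 \bbE[T^2]$, as required.

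The genuinely hard step is not in this proposition but in its prerequisite, \lem{RM 1}: establishing that $\bbE[R(t)M(t)]$ converges and identifying the limit. There $R(t)$ and $M(t)$ are correlated through the common renewal epochs, so one naturally applies \lem{martingale 1} to a product of the form $R(t)M(t)$ (or to an equivalent predictable quadratic-covariation construction) to obtain a renewal equation whose asymptotic solution requires the direct Riemann integrability assumption (4a). For the present proposition, once that lemma is granted, the derivation above is purely algebraic and the only subtlety one must not overlook is retaining the $\lambda^3 \sigma_T^2 \bbE[R(t)]$ contribution in \eq{E M2 2}, which supplies the $\tfrac12 \lambda^4 \sigma_T^2 \bbE[T^2]$ term without which the final constants would not balance.
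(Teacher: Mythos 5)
Your proposal is correct and follows exactly the route the paper takes, which is to substitute \eq{E M2 2}, \eq{R2 asym 2} and \eq{RM 3} into \eq{variance 1} and collect terms; the paper simply asserts this in one sentence, while you spell out the algebra (including the useful reminder that the $\lambda^3\sigma_T^2\bbE[R(t)]$ contribution from \eq{E M2 2} must be kept). The only minor slip is that your intermediate list of monomials in the constant omits the $\lambda^2\bbE[T^2]$ term (coefficient $1$) coming from $2\lambda\bbE[R(t)M(t)]$, though your final collection and answer are nonetheless correct.
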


This result was obtained first by \citet{Smit1959} under the condition that the
distribution $F$ of $T$ is spread out.

\citet{DaleMoha1978} proposed two
conditions $A_{\epsilon}$ and $B_{\rho}$ as below.
\begin{mylist}{5}
\item [\textit{Condition} $A_\epsilon\,.$] For some $\epsilon \ge 0$.
\begin{equation}
\label{eq:R-C limit 2}
  \bbE[R(t)] - C = o(t^{-1-\epsilon}), \qquad t \to \infty,
\end{equation}
\item [\textit{Condition} $B_\rho\,.$] $F$ is strongly nonlattice, that is,
\begin{equation*}
  \liminf_{|\theta| \to \infty} |1 - \varphi_F(\theta)| > 0,
\end{equation*}
and $0 < \bbE[T^\rho] < \infty$ for some $\rho \ge 2$, where $\varphi_F$ is
the characteristic function of $F$, namely,
$\varphi_F(\theta) = \bbE[\re^{\imath\theta T}]$ for $\theta\in\dd{R}$ with
$\imath = \sqrt{-1}$. 
\end{mylist}
Now the spread out condition implies that $F$ is strongly nonlattice
(see e.g.\ \cite[Chapter VII, Proposition 1.6]{Asmu2003}), so when
$\bbE[T^3]<\infty$, \citet{DaleMoha1978}'s Condition $B_\rho$ is weaker
than Smith's assumption \cite{Smit1959}. 

It is easy to see that (4a) is satisfied if either Condition
$A_\epsilon$ holds for $\epsilon>0$ or Condition $B_\rho$ holds
(see e.g.\ \cite[(2.5a)]{DaleMoha1978}). However a function $f(t)=o(t^{-1})$
need not be directly Riemann integrable. Hence, (4a) may be stronger
than Condition $A_0$, though it is unclear whether this case can occur.
On the other hand, \citet[Corollary 1]{Dale2016} shows that 
\begin{equation}
\label{eq:R-C limit 1}
  \lim_{t\to\infty} \int_0^t \big(\bbE[R(u)] - C\big) \, \rd u \hspace{1ex}
\mbox{ exists and is finite},
\end{equation}
if and only if $\bbE[T^3] <\infty$. 
Thus, we may conjecture that $\bbE[T^3] < \infty$ implies (4a),
but this is a hard problem because $\bbE[R(t)] - C$ may oscillate wildly
around the origin as $t\to\infty$.  In other words,
we do not know how to compare (4a) with Condition $A_0$.

Thus, the semi-martingale decomposition \eq{decomposition 3a} can be
used to study the asymptotic behaviour of a higher moment of $N(t)$, but it
appears to require an extra condition such as (4a).


\section{Extension to a general counting process}
\label{sect:extension}
\setnewcounter

The present martingale approach is easily adapted to a general counting process
as long as $D_Y(t)$ of \eq{DY 1} vanishes. Here we consider such an
extension, assuming (A.1), (A.2) and (A.4). Recall that $\dd{F}^X \preceq
\dd{F}$ stands for $\sr{F}^X_{t} \subset \sr{F}_t$ for all $t \ge 0$, where
$X(t) = \big(N(t),R(t)\big)$. Our basic idea is to use a condition similar to
(\sect{martingale}c) (see condition (\sect{extension}a) later).

First we introduce a random function to replace $\lambda$ in
\eq{decomposition 3a} for $v > 0$. 
Let $T^{(v)}_n = T_n \wedge v$, and define $\widetilde{\lambda}^{(v)}(t)$ by
\begin{equation}
\label{eq:general lambda 1}
  \widetilde{\lambda}^{(v)}(t) = \frac 1{\bbE[T^{(v)}_{N(t)}
  \mid \sr{F}_{t_{N(t)-1}-}]}, \qquad t \ge 0,
\end{equation}
equivalently,
\begin{equation*}
  \widetilde{\lambda}^{(v)}(t)
= \frac 1{\bbE[T^{(v)}_n\mid\sr{F}_{t_{n-1}-}]}, \qquad t \in [t_{n-1}, t_n),
\quad n=1,2,\ldots\,.
\end{equation*}
By the orderliness of $N(\cdot)$, $\bbE[T_n^{(v)}\mid\sr{F}_{t_{n-1}-}]$ is finite
and positive, so $\widetilde{\lambda}^{(v)}(t)$ is finite and bounded below by
$1/v$, and is therefore well defined.

\begin{lemma}
\label{lem:decomposition 3}
Let $\dd{F}$ be a filtration such that $\dd{F}^{X} \preceq \dd{F}$, and assume {\rm(A.1)}, {\rm(A.2)} and {\rm(A.4)}.  Then the counting process $N(t)$ can be
decomposed for each $v>0$ via $R^{(v)}(t)= R(t)\wedge v$ as
\begin{equation}
\label{eq:representation 3}
  N(t) = \int_0^t \widetilde{\lambda}^{(v)}(s)\, 1\big(R(s)\le v\big)\,\rd s
+ \widetilde{\lambda}^{(v)}(t) R^{(v)}(t) + M^{(v)}(t),
\end{equation}
where 
\begin{equation}
\label{eq:martingale 3}
  M^{(v)}(t) = \sum_{n=1}^{N(t)} \big(1 - \widetilde{\lambda}^{(v)}(t_{n-1}) T^{(v)}_n\big), \qquad t \ge 0,
\end{equation}
is an $\dd{F}$-martingale.
\end{lemma}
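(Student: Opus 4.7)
The plan is to apply \lem{martingale 1} with the choice
$$Y(t) = N(t) - \widetilde{\lambda}^{(v)}(t)\, R^{(v)}(t),$$
which generalises both $Y(t)=N(t)-\lambda R(t)$ from \thr{martingale 2} and the truncated variant $Y(t)=N(t)-\lambda_v(v\wedge R(t))$ used in the direct argument of \sectn{Blackwell}. The multiplier $\widetilde{\lambda}^{(v)}$ is engineered precisely so that $\widetilde{\lambda}^{(v)}(t_{n-1})\,\bbE[T_n^{(v)}\mid\sr{F}_{t_{n-1}-}]=1$, which will force the predictable drift $D_Y$ of \eq{DY 1} to vanish identically.

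First I would verify (B.1)--(B.3). Since $\dd{F}^X\preceq\dd{F}$ and $\widetilde{\lambda}^{(v)}$ is piecewise $\sr{F}_{t_{n-1}-}$-measurable, $Y$ is $\dd{F}$-adapted; and since $\widetilde{\lambda}^{(v)}$ is constant on each interval $[t_{n-1},t_n)$ and $R^{(v)}$ is continuous off $\{t_n\}$, $Y$ can jump only at the counting times. On the interior of $[t_{n-1},t_n)$ the right-hand derivative of $R^{(v)}(t)=R(t)\wedge v$ equals $-1(R(t)\le v)$, using that $R$ is strictly decreasing between counts so that $R(t+h)<v$ for all small $h>0$ immediately after any threshold-crossing instant; consequently
$$Y'(t) = \widetilde{\lambda}^{(v)}(t)\, 1(R(t)\le v).$$

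At each $t_n$ one has $N(t_n-)=n$, $R(t_n-)=0$ and $R^{(v)}(t_n)=T_{n+1}^{(v)}$, so the defining identity for $\widetilde{\lambda}^{(v)}$ yields
$$\bbE[Y(t_n)\mid\sr{F}_{t_n-}] = (n+1)-\widetilde{\lambda}^{(v)}(t_n)\,\bbE[T_{n+1}^{(v)}\mid\sr{F}_{t_n-}] = n = Y(t_n-),$$
so $D_Y\equiv 0$; the same identity gives $|\Delta Y(t_n)|\le 1+\widetilde{\lambda}^{(v)}(t_n)T_{n+1}^{(v)}$ and therefore $\bbE[|\Delta Y(t_n)|\mid\sr{F}_{t_n-}]\le 2$, verifying (B.4) with $c_0=2$. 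Then \lem{martingale 1}, together with $Y(0-)=0$ (from (A.1) and the convention $R(0-)=0$), produces \eq{representation 3}; the resulting $M_Y$ coincides with $M^{(v)}(t)$ of \eq{martingale 3} after a one-step shift of the summation index. The only step that really requires thought is the choice of $Y$: once that is fixed, both the vanishing of $D_Y$ and the conditional boundedness required by (B.4) collapse onto the single identity $\bbE[\widetilde{\lambda}^{(v)}(t_n)T_{n+1}^{(v)}\mid\sr{F}_{t_n-}]=1$ built into \eq{general lambda 1}, and no further analytic input beyond that already used for \thr{martingale 2} is needed.
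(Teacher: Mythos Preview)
Your proposal is correct and follows essentially the same route as the paper's own proof: apply \lem{martingale 1} with $Y(t)=N(t)-\widetilde{\lambda}^{(v)}(t)R^{(v)}(t)$, compute $Y'(t)=\widetilde{\lambda}^{(v)}(t)\,1(R(t)\le v)$, use the defining identity for $\widetilde{\lambda}^{(v)}$ to obtain $D_Y\equiv 0$ and the bound $\bbE[|\Delta Y(t_n)|\mid\sr{F}_{t_n-}]\le 2$, and then shift the summation index to identify $M_Y$ with $M^{(v)}$. The paper's proof is only slightly more terse in not spelling out (B.1)--(B.3), but the substance is identical.
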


\begin{remark} \label{rem:4.1}
{\rm The left-hand side of \eq{representation 3} does not depend on $v$, and 
therefore the right-hand side is also independent of $v$ (see \eq{truncated 3}
and arguments below it).  We note that \lem{decomposition 3} holds
for $v=\infty$, and can be regarded as an extension of Theorem 2.1. }
\end{remark}

\begin{proof}
Apply \lem{martingale 1} with $Y(t) = N(t) -\wtillambda^{(v)}(t) R^{(v)}(t)$.
Note that
\begin{equation*}
  Y'(t) = \wtillambda^{(v)}(t) \,1\big(R(t) \le v\big), \qquad t_{n-1} <t<t_n,
\end{equation*}
because $\wtillambda^{(v)}(t)$ is piecewise constant and $R'(t)= -1$
for $t \in (t_{n-1}, t_n)$.  The facts that $R^{(v)}(t_n-)=0$,
$R^{(v)}(t_n) = T^{(v)}_{n+1}$ and $\widetilde{\lambda}^{(v)}(t_n)$ is
$\sr{F}_{t_n-}$-measurable, imply that
\begin{align}
\label{eq:DY 3}
 D_Y(t)  & =\sum_{n=0}^{N(t)-1} \bbE\big[1 - \widetilde{\lambda}^{(v)}(t_n)
T^{(v)}_{n+1} \mid\sr{F}_{t_n-}\big]  \nonumber\\
 &  = \sum_{n=0}^{N(t)-1} \big(1 - \widetilde{\lambda}^{(v)}(t_n)
\bbE[T^{(v)}_{n+1}\mid\sr{F}_{t_n-}]\big) = 0.
\end{align}
On the other hand,
\begin{equation*}
  M_Y(t) = 1 - \widetilde{\lambda}^{(v)}(t_0) T^{(v)}_1
+ \sum_{n=1}^{N(t)-1} \big(1 - \widetilde{\lambda}^{(v)}(t_n) T^{(v)}_{n+1}\big)
= \sum_{n=1}^{N(t)} \big(1 - \widetilde{\lambda}^{(v)}(t_{n-1}) T^{(v)}_n\big).
\end{equation*}
Finally, 
\begin{equation*}
  \dd{E}\big[|\Delta Y(t_{n})|\bigm|\sr{F}_{t_{n}-}\big] \le
\dd{E}\big[1+\widetilde{\lambda}^{(v)}(t_n) T^{(v)}_{n+1}\bigm|\sr{F}_{t_{n}-}\big] = 2.
\end{equation*}
Hence, (B.4) is satisfied, and therefore $M^{(v)}(\cdot) \equiv M_{Y}(\cdot)$ is an $\dd{F}$-martingale by \lem{martingale 1}, completing the proof of \lem{decomposition 3}.
\end{proof}

Thus, we have derived the semimartingale representation \eq{representation 3}
for $N(\cdot)$ under the assumptions (A.1), (A.2) and (A.4). Using this
representation, we extend Blackwell's renewal theorem to a general counting
process.  To do this, we focus attention on Condition (\sect{martingale}c) of
\lem{Renewal 1}, of which the following can be viewed as its extended
version.
\begin{itemize}
\item [(\sect{extension}a)] There exists $v>0$ such that as $t\to\infty$,
$\bbE\big[\wtillambda^{(v)}(t)\,1\big(R(t)\le v\big)\big]$ and
$\bbE\big[\wtillambda^{(v)}(t)\,R^{(v)}(t)\big]$ 
converge to finite positive limits. 
\end{itemize}
Since $\bbE[T^{(v)}_{N(t)}\mid\sr{F}_{t_{N(t)-1}-}]$ is bounded by $v>0$, it
may be easier to check condition (\sect{extension}a) via the weak convergence
of $\bbE[T_{N(t)}^{(v)} \mid \sr{F}_{t_{N(t)-1}-}]$ as $t\to\infty$, but to do
this we need an extra condition of uniform integrability: the following is
sufficient for (\sect{extension}a).
\begin{itemize}
\item [(\sect{extension}b)] There exists $v>0$ such that 
\begin{enumerate}
\item[(\sect{extension}b-i)] $v$ is a continuity point of the limit
distribution of $R^{(v)}(t)$, 
\item[(\sect{extension}b-ii)]
$\big(\bbE[T^{(v)}_{N(t)} \mid \sr{F}_{t_{N(t)-1}-}],\, R^{(v)}(t)\big)$ has a
limiting distribution as $t\to\infty$, and 
\item[(\sect{extension}b-iii)] $\{\wtillambda^{(v)}(t):t\ge 0\}$ is uniformly integrable, i.e.\
 $$\lim_{a\to\infty} \sup_{t\ge 0} \bbE\big[\wtillambda^{(v)}(t)\,1\big(
   \wtillambda^{(v)}(t) > a\big)\big] = 0.$$ 
\end{enumerate}
\end{itemize}

We now present a general conclusion from (\sect{extension}a).
\begin{theorem}
\label{thr:Blackwell general 1}
Under the assumptions of \lem{decomposition 3}, if Condition
(\sect{extension}a) holds, then there exists $\lambda > 0$ such that both
\begin{equation}
\label{eq:lambda 1}
  \lim_{t \to \infty} \bbE[N(t)]\big/t = \lambda
\end{equation}
and, for $0<h<\infty$,
\begin{equation}
\label{eq:Blackwell general 1}
   \lim_{t \to \infty} \big(\bbE\big[N(t+h)\big]
   - \bbE\big[N(t)\big]\big) = \lambda h.
\end{equation}
\end{theorem}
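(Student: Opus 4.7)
My approach would be to take expectations in the semimartingale representation \eq{representation 3} and convert Condition (\sect{extension}a) directly into the two claims. Since $M^{(v)}(\cdot)$ is an $\dd{F}$-martingale by \lem{decomposition 3}, the first thing to check is that $\bbE[M^{(v)}(t)] = 0$ for every $t \ge 0$. Under (A.1) we have $N(0)=1$, so from \eq{martingale 3}, $M^{(v)}(0) = 1 - \wtillambda^{(v)}(0)\, T_1^{(v)}$, and since $\wtillambda^{(v)}(0) = 1/\bbE[T_1^{(v)}\mid\sr{F}_{0-}]$ is $\sr{F}_{0-}$-measurable, conditioning on $\sr{F}_{0-}$ gives $\bbE[M^{(v)}(0)] = 0$; the martingale property then propagates this to all $t$. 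Applying Tonelli to the non-negative first term in \eq{representation 3} and taking expectations yields the key identity
\begin{equation*}
\bbE[N(t)] \,=\, \int_0^t \bbE\big[\wtillambda^{(v)}(s)\,1(R(s)\le v)\big]\,\rd s \,+\, \bbE\big[\wtillambda^{(v)}(t)\,R^{(v)}(t)\big].
\end{equation*}

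Second, I would write $\lambda$ and $c$ for the (finite, strictly positive) limits of $\bbE[\wtillambda^{(v)}(t)\,1(R(t)\le v)]$ and $\bbE[\wtillambda^{(v)}(t)\,R^{(v)}(t)]$ as $t\to\infty$ granted by (\sect{extension}a). Then \eq{lambda 1} follows immediately by a Cesàro argument: dividing the above identity by $t$, the integral-average tends to $\lambda$ because its integrand converges to $\lambda$, while the boundary term divided by $t$ tends to $0$ because it converges to the finite constant $c$.

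Third, for the Blackwell-type statement \eq{Blackwell general 1}, I would subtract the identities at times $t+h$ and $t$:
\begin{equation*}
\bbE[N(t+h)] - \bbE[N(t)] \,=\, \int_t^{t+h} \bbE\big[\wtillambda^{(v)}(s)\,1(R(s)\le v)\big]\,\rd s \,+\, \bbE\big[\wtillambda^{(v)}(t+h)\,R^{(v)}(t+h)\big] - \bbE\big[\wtillambda^{(v)}(t)\,R^{(v)}(t)\big].
\end{equation*}
The integral is over a bounded window of length $h$, and the integrand converges to $\lambda$; so a simple $\epsilon$--$h$ bound $\int_t^{t+h} |g(s) - \lambda|\,\rd s \le h \sup_{s \ge t}|g(s)-\lambda|$ gives convergence to $\lambda h$. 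The two boundary terms both tend to $c$, so their difference vanishes. A priori $\lambda$ could depend on $v$, but since the left-hand side of \eq{representation 3} is independent of $v$ (cf.\ \rem{4.1}), the same $\lambda$ arises for every admissible $v$, consistently yielding \eq{lambda 1} and \eq{Blackwell general 1} with a common $\lambda > 0$.

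There is no real obstacle here once the right identity is in hand: Condition (\sect{extension}a) has been tailored precisely so that the martingale disappears under expectation, the Cesàro step is automatic, and the bounded-interval integral convergence needs nothing beyond pointwise convergence of the integrand. The delicate work is pushed upstream into verifying (\sect{extension}a) itself---which the authors set up via the sufficient package (\sect{extension}b-i)--(iii)---rather than into the proof of this theorem.
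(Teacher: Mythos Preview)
Your proposal is correct and follows essentially the same route as the paper: take expectations in \eq{representation 3} to obtain the identity \eq{decomposition 4}, divide by $t$ for \eq{lambda 1}, and difference at $t+h$ and $t$ for \eq{Blackwell general 1}, using Condition (\sect{extension}a) in both steps. You are slightly more explicit than the paper in verifying $\bbE[M^{(v)}(0)]=0$, invoking Tonelli, and spelling out the bounded-window integral estimate, but the argument is the same.
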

\begin{proof}
Let $v > 0$ be such that the expectations in Condition (\sect{extension}a)
converge; then by (\sect{extension}a) there exists $\lambda^{(v)}>0$ such that
\begin{equation}
\label{eq:lambda 2}
  \lim_{t\to\infty} \bbE\big[\wtillambda^{(v)}(t)\,1\big(R(t) \le v\big)\big]
   =\lambda^{(v)}.
\end{equation}
Apply \lem{decomposition 3}. Taking the expectation of \eq{representation 3}
yields
\begin{equation}
\label{eq:decomposition 4}
 \bbE[N(t)] 
  = \int_0^t \bbE\big[\wtillambda^{(v)}(s) \,1\big(R(s)\le v\big)\big] \,\rd s 
  + \bbE\big[\wtillambda^{(v)}(t) \,R^{(v)}(t)\big].
\end{equation}
Divide both sides of this equation by $t$; letting $t \to \infty$ yields
$\lim_{t \to \infty} \bbE[N(t)]\big/t = \lambda^{(v)}$.
Now the left-hand side of this relation is independent of $v$, so
$\lambda^{(v)}$ must also be independent of $v$: set $\lambda = \lambda^{(v)}$.
We thus have \eq{lambda 1}, while \eq{Blackwell general 1} follows from
\eq{decomposition 4} and (\sect{extension}a). 
\end{proof}

In applying Theorem \ref{thr:Blackwell general 1} it is important to check
Condition (\sect{extension}a) or (\sect{extension}b).  Obviously, Conditions
(\sect{extension}b) are satisfied by a non-arithmetic renewal process (see
Assumptions (A.1)--(A.3)), for which $T^{(v)}_n$ is identically distributed
and independent of $\sr{F}_{t_{n-1}-}\,$. We sketch two scenarios in which
the two conditions are relaxed.

\subsection{Modulated inter-arrival times}
\label{sect:modulated}

Let $J(\cdot) \equiv \{J(t); t \ge 0\}$ be a piecewise constant process on the
state space $S$ which is a Polish space.  Let $t_0 = 0$, and for
$n=1,2,\ldots\,$
let $t_n$ be the $n^{\rm th}$ discontinuous instant of $J(t)$; these instants
generate the counting process $N(\cdot)$. As usual, let $T_n = t_n-t_{n-1}$ and
for $t\in [t_{n-1},t_n)$ set $R(t) = t-t_{n-1}$ and $J(t) = J(t_{n-1})$.
Define a joint process $U(\cdot)$ by   
\begin{equation*}
  U(t) = \big(J(t), N(t), R(t)\big), \qquad t \ge 0.
\end{equation*}
Let $\sr{F}^{U}_t = \sigma(\{U(s); s\le t\})$ and let
$\dd{F}^U = \{\sr{F}^U_t; t \ge 0\}$; this is a filtration for
$U(\cdot)$. Let $\dd{F} = \dd{F}^U$, then obviously $\dd{F}^X \preceq \dd{F}$
since $X(t) = \big(N(t),R(t)\big)$. Assume the following conditions:
\begin{itemize}
\item [(M1)] $T_n$ is independent of $\sr{F}_{t_{n-1}-}\,$;
\item [(M2)] the distribution of $T_n$ is non-arithmetic and determined by
$J(t_{n-1}) \in S$.
\end{itemize}

We refer to a process satisfying (M1) and (M2) as a {\it modulated renewal
process}.  A Markov modulated renewal process is the special case in which
$\{J(t_n): n=0,1,\ldots\}$ is a Markov chain.  Let $T^{(v)}(x)$ be the
conditional expectation of $T^{(v)}_{n} \equiv v \wedge T_n$ given
$J(t_{n-1})=x$, that is,  $T^{(v)}(x) = \bbE[T^{(v)}_n\mid J(t_{n-1})=x]$.

\begin{corollary}
\label{cor:modulated RP}
For a modulated renewal process as defined above, if 
{\rm(i)} $S$ is countable, {\rm(ii)} $\inf_{x\in S}\bbE[T(x)] >0$, where
$T(x) = \bbE[T_n\mid J(t_{n-1})=x]$, and {\rm(iii)} $\big(J(t),R(t)\big)$
has a limit distribution as $t \to \infty$, then both \eq{lambda 1} and
Blackwell's formula \eq{Blackwell general 1} hold with $\lambda$ defined by
\begin{equation}
\label{eq:lambda 4}
  \lambda
 = \bbE\bigg[\frac {1} {T^{(v)}(\widetilde{J})} 1(\widetilde{R} \le v) \bigg]
= \bbE\bigg[\frac {1} {T(\widetilde{J})}\bigg],
\end{equation}
where $(\widetilde{J},\widetilde{R})$ is a r.v.\ with the limit distribution
of $\big(J(t), R(t)\big)$, and $v$ is any continuity point of the
distribution of $\widetilde{R}$. 
\end{corollary}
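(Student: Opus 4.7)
The plan is to apply \thr{Blackwell general 1} by verifying its Condition (\sect{extension}a) in this modulated setting, and then to read off the explicit formula for $\lambda$. Under (M1)--(M2) the conditional law of $T_n$ given $\sr{F}_{t_{n-1}-}$ is determined by $J(t_{n-1})$, so $\bbE[T^{(v)}_n\mid\sr{F}_{t_{n-1}-}] = T^{(v)}(J(t_{n-1}))$. Since $J(\cdot)$ is piecewise constant with $J(t) = J(t_{n-1})$ on $[t_{n-1},t_n)$, the identity \eq{general lambda 1} reduces to the closed form $\wtillambda^{(v)}(t) = 1/T^{(v)}(J(t))$. Condition (ii) then yields the uniform bound $\wtillambda(t) := \wtillambda^{(\infty)}(t) = 1/T(J(t)) \le 1/\epsilon$, where $\epsilon := \inf_{x\in S}T(x) > 0$.

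First I would establish \eq{lambda 1} using the $v=\infty$ form of \lem{decomposition 3} (see \rem{4.1}). Taking expectations yields
\[
\bbE[N(t)] = \int_0^t\bbE[1/T(J(s))]\,\rd s + \bbE[R(t)/T(J(t))].
\]
Since $1/T(x)\le 1/\epsilon$ and $\bbP(J(s)=x)\to\pi(x) := \bbP(\widetilde J = x)$ on the discrete space $S$ by condition (iii), bounded convergence gives $\bbE[1/T(J(s))] \to \bbE[1/T(\widetilde J)] =: \lambda$. The deterministic inequality $R(t) \le T_{N(t)}$ combined with the tower property yields the uniform estimate
\[
\bbE[R(t)/T(J(t))] \le \bbE[T_{N(t)}/T(J(t))] = \bbE\big[\bbE[T_{N(t)}\mid\sr{F}_{t_{N(t)-1}-}]/T(J(t))\big] = 1,
\]
so dividing by $t$ and letting $t\to\infty$ gives \eq{lambda 1} with $\lambda = \bbE[1/T(\widetilde J)]$.

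For the full conclusion \eq{Blackwell general 1} I would pick $v<\infty$ to be a continuity point of the distribution of $\widetilde R$ (the exceptional set is countable) and invoke \lem{decomposition 3} in its finite-$v$ form. By condition (iii) and the countability of $S$, the joint probabilities $\bbP(J(s)=x,R(s)\le v)$ converge termwise to $\bbP(\widetilde J=x,\widetilde R\le v)$ for every $x\in S$. I would verify Condition (\sect{extension}a) by expanding both $\bbE[\wtillambda^{(v)}(s)\,1(R(s)\le v)]$ and $\bbE[\wtillambda^{(v)}(s)\,R^{(v)}(s)]$ as sums $\sum_{x\in S}(1/T^{(v)}(x))\cdot(\cdots)$, passing to the limit termwise, and using the analogous uniform estimate $\bbE[\wtillambda^{(v)}(s)\,R^{(v)}(s)]\le 1$ (from $R^{(v)}(s)\le T^{(v)}_{N(s)}$ by the same tower argument) to justify the interchange. \thr{Blackwell general 1} then delivers \eq{Blackwell general 1} with $\lambda^{(v)} = \bbE[1/T^{(v)}(\widetilde J)\,1(\widetilde R\le v)]$, matching the first expression in \eq{lambda 4}; the equality with $\bbE[1/T(\widetilde J)]$ is just the $v$-independence of $\lambda^{(v)}$ asserted by \thr{Blackwell general 1} combined with the second paragraph.

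The main obstacle is precisely the termwise passage to the limit in the sum over $x\in S$ in the finite-$v$ step: the factor $1/T^{(v)}(x)$ need not be uniformly bounded on $S$, because condition (ii) bounds $1/T(x)$ but its truncated counterpart $T^{(v)}(x)$ can be arbitrarily small even when $T(x)\ge\epsilon$ (e.g.\ heavy-tailed $T$ conditional on some states). Overcoming this rests on the uniform estimate $\bbE[\wtillambda^{(v)}(s)R^{(v)}(s)]\le 1$ used as a Fatou-type dominator, matched against an upper bound coming from the genuinely bounded $v=\infty$ rate $1/T(J(\cdot))\le 1/\epsilon$, so that the partial sums can be squeezed termwise to the claimed limits.
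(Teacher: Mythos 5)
Your overall strategy matches the paper's: reduce \eq{general lambda 1} to the closed form $\wtillambda^{(v)}(t)=1/T^{(v)}(J(t))$, verify Condition~(\sect{extension}a), and invoke \thr{Blackwell general 1}. And you put your finger on a genuine subtlety that the paper's proof glosses over: condition~(ii) gives $\sup_{x}1/T(x)\le 1/\epsilon$, but $T^{(v)}(x)\le T(x)$ works the wrong way, so boundedness of $1/T^{(v)}(x)$ over $S$ does not follow. (The paper simply asserts that $f(x)=1/T^{(v)}(x)$ is bounded by (i)--(ii).) That is a sharp reading.

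The workaround you propose, however, does not close the gap. The core step of your uniform estimate,
\[
\bbE\big[T_{N(t)}/T(J(t))\big] \;=\; \bbE\Big[\bbE\big[T_{N(t)}\mid\sr{F}_{t_{N(t)-1}-}\big]\big/T(J(t))\Big] \;=\; 1,
\]
is invalid, because the random index $N(t)$ is not decoupled from $T_{N(t)}$: the event $\{N(t)=n\}=\{t_{n-1}\le t\}\cap\{T_n>t-t_{n-1}\}$ involves $T_n$, so one cannot freeze $N(t)=n$ and then condition on $\sr{F}_{t_{n-1}-}$ as if $T_n$ had its ordinary conditional law. This is exactly the inspection paradox. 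A sanity check in the ordinary renewal case with $\bbE[T^2]<\infty$: there $T(J(t))\equiv\bbE[T]=1/\lambda$, so your identity would force $\bbE[T_{N(t)}]=1/\lambda$, whereas in fact $\bbE[T_{N(t)}]\to\lambda\bbE[T^2]\ge 1/\lambda$, with strict inequality unless $T$ is degenerate. So the estimate runs in the wrong direction, not merely fails to be an equality. Consequently both your derivation of \eq{lambda 1} (which divides $\bbE[R(t)/T(J(t))]$ by $t$ using the erroneous bound) and the ``Fatou-type dominator'' used for the finite-$v$ termwise passage rest on a false premise. To salvage the argument under conditions (i)--(iii) as literally stated, one would either have to strengthen (ii) to $\inf_{x\in S}T^{(v)}(x)>0$ for the chosen $v$ (which is what the paper's proof tacitly uses), restrict to finite $S$ (cf.\ \rem{4.3}), or supply a genuine uniform-integrability argument for the family $\{\wtillambda^{(v)}(t)\}$ --- none of which your proposal does.
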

\begin{proof}
From condition (iii), 
for a continuity point $v$ of the distribution of $\widetilde{R}$, and for a
bounded function $f: S \mapsto \bbR$, 
 \begin{align*}
 \lim_{t\to\infty} \bbE\big[ f\big(J(t)\big)\,1\big(R(t)\le v\big)\big]
  &= \bbE\big[f(\widetilde{J})\,1(\widetilde{R}\le v)\big],\\
  \lim_{t\to\infty} \bbE\big[f\big(J(t)\big)\,R^{(v)}(t)\big]
  &= \bbE\big[f(\widetilde{J}) (\widetilde{R}\wedge v )\big].
\end{align*}
By assumptions (i) and (ii) of the corollary, $f(x) := 1/\bbE[T^{(v)}(x)]$ is
continuous, bounded and positive, where $x$ is discrete, so we take a discrete
topology.  Thus, Condition (\sect{extension}a) is satisfied, and therefore
\eq{lambda 1} and \eq{Blackwell general 1} are obtained by
\thr{Blackwell general 1}. Here, \eq{lambda 4} is immediate from \eq{lambda 2}
in the proof of \thr{Blackwell general 1}.
\end{proof}

\begin{remark} \label{rem:4.3}
{\rm Under the conditions of \cor{modulated RP}, $J(\cdot)$ is piecewise
continuous but no transition structure like that of a Markov chain is assumed:
the restrictive conditions (i) and (ii) may be inconsistent with Markovianity.
If $S$ is a finite set then (i) and (ii)
automatically hold, and these may constitute circumstances when the present
framework is useful.  However, for a Markov modulated renewal process, 
Blackwell's formula \eq{Blackwell general 1} can be obtained under a certain
recurrence condition of $J(t)$ without conditions (i) and (ii) (see e.g.\
\cite{Alsm1997}).  In such a case the present approach would not be suitable. }
\end{remark}

\subsection{Stationary inter-arrival times}
\label{sect:stationary IA}

Consider now the scenario in which $\{T_n; n \in \bbZ_+\}$ is a stationary
sequence of positive reals with finite means, where $\bbZ_+$ is the set of
all non-negative integers. This sequence can be extended to a stationary
sequence that starts at time $-\infty$, and is well described by the Palm
distribution $\bbP$ on a measurable space $(\Omega, \sr{F})$.
[We digress to note that in the point process literature, the Palm distribution
is often notated as $\bbP_0$, and if need be, the distribution of a
stationary point process (i.e.\ for which
the distributions of counts on sets $A_n$ are
the same as for the translated sets $A_n+t$) are notated $\bbP$. To be
consistent with Sections 1--3 of this paper we retain the notation $\bbP$ for
Palm distributions, and write $\barbbP$ for (count) stationary
distributions as at \eq{cycle 1} below.]

We introduce the standard formulation to describe $\{T_n\}$ by a point process
under $(\Omega,\sr{F},\dd{P})$ (see e.g.\ \cite{BaccBrem2003}). Let
$\lambda = 1/\bbE[T_0]$, and let $\{t_n\}$ be a two-sided random sequence
such that $t_0 = 0$ and
\begin{equation*}
  t_n = \begin{cases}
T_1+ \cdots + T_n ,  & n > 0,  \\
-(T_{-1}+ \cdots + T_n) ,  & n < 0.
\end{cases}
\end{equation*}
Define a point process $N(\cdot)$ on $\bbR$ via sets $B \in \sr{B}(\dd{R})$ by
\begin{equation*}
  N(B) = \sum_{n=-\infty}^\infty 1(t_n \in B).
\end{equation*}
Similar to \eq{Rdef}, we define $R(t)$ as
\begin{equation*}
  R(t) = \begin{cases}
 \sum_{\ell=1}^{N([0,t])} T_{\ell} - t  & \text{for }t\ge 0, \\
\noalign{\smallskip}
 \sum_{\ell=1}^{N((t,0))} (-T_{-\ell}) + t  &\text{for }t < 0.
\end{cases}
\end{equation*}

We can then construct a shift operator group $\{\theta_{t}; t\in\bbR\}$ on
$\Omega$ such that
\begin{itemize}
\item [(S1)] $\theta_t \circ A = \{\omega\in\Omega: \theta_t^{-1}(\omega)\in A\}$;
\item [(S2)] the point process $N$ is consistent with $\theta_t$, that is,
$\theta_t \circ N(B) = N(B+t)$ for bounded $B \in \sr{B}(\dd{R})$ and
$B+t = \{x+t \in \dd{R}: x \in B\}$; and
\item [(S3)] for $n \in \bbZ$, $\bbP(\theta_{t_n} \circ A) = \bbP(A)$ for
$A \in \sr{F}$, where $\bbZ$ is the set of all integers.
\end{itemize}
 
Next define a probability measure $\barbbP$ on $(\Omega,\sr{F})$ by
\begin{equation}
\label{eq:cycle 1}
 \barbbP(A) = \lambda \bbE\bigg[\int^{T_1}_0 \theta_t \circ 1_A \ \rd t\bigg],
\qquad A \in \sr{F}.
\end{equation}
It is well known (see e.g.\ \cite{BaccBrem2003})
that $N(\cdot)$ is a stationary point process under
$\barbbP$, and $\ol{\bbE}[N(1)] = \lambda$. Furthermore, we recover $\bbP$
from $\barbbP$ by the so-called inversion formula: for each $\epsilon > 0$
\begin{equation}
\label{eq:inversion 1}
  \bbP(A) = \frac 1{\lambda \epsilon} \barbbP\bigg[\int_0^\epsilon
\theta_{-t} \circ 1_A\ N(\rd t) \bigg], \qquad A \in \sr{F}.
\end{equation}

We can now formulate Blackwell's renewal theorem for the stationary sequence.

\begin{corollary}
\label{cor:Palm 1}
Under assumptions {\rm(A.1)--(A.2)}, if {\rm(i)} $\{T_n; n \in \bbZ\}$ is a
stationary and ergodic sequence under the Palm distribution $\bbP$,
{\rm(ii)} $\{\wtillambda^{(v)}(t): t\ge 0\}$ is uniformly integrable under
$\bbP$, and {\rm(iii)} the mixing condition
\begin{equation}
\label{eq:mixing 1}
  \lim_{t\to\infty} \ol{\bbP}(\theta_{-t} \circ A, t_{1} \le u) 
= \barbbP(A) \,\barbbP(t_1 \le u), \qquad A \in \sr{F}, u \ge 0,
\end{equation}
holds, then Blackwell's formula \eq{Blackwell general 1} holds together with
\eq{lambda 1}, and
\begin{equation} \label{eq:lambda 3}
    \lambda = \frac{1}{\bbE[T_1]}
  = \bbE\bigg[\frac{1\big(R(0) \le v\big)}{\bbE[T_1^{(v)}\mid \sr{F}_{0-}]}
\bigg], \qquad v\ge 0,
\end{equation}
where the filtration $\dd{F} \equiv \{\sr{F}_{t}; t \in \dd{R}\}$ is given by $\sr{F}_{t} = \sigma\left(\{R(u); u \le t\} \cup \bigcup_{n=-\infty}^{\infty} \{t_{n} \le t\}\right)$. 
\end{corollary}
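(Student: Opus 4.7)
The plan is to apply \thr{Blackwell general 1} under the Palm probability $\bbP$, so the main tasks are to verify its hypotheses --- namely (A.1), (A.2), (A.4), $\dd{F}^X \preceq \dd{F}$, and Condition (\sect{extension}a) --- and then to identify the resulting constant $\lambda$. Assumptions (A.1) and (A.2) are immediate from the Palm convention $t_0 = 0$, the stationarity of $\{T_n\}$, and $\bbE[T_1] < \infty$; the inclusion $\dd{F}^X \preceq \dd{F}$ is built into the definition of $\sr{F}_t$; and (A.4) follows from $\barbbE[N([0,t])] = \lambda t$ under the stationary $\barbbP$ together with the Palm inversion formula \eq{inversion 1}.

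The principal task is to verify Condition (\sect{extension}a), for which I would check the stronger Condition (\sect{extension}b). The key observation is that the pair $\big(\wtillambda^{(v)}(t), R^{(v)}(t)\big)$ is shift-covariant under $\{\theta_s\}$, so its $\bbP$-distribution at time $t$ coincides with the distribution of $\big(\wtillambda^{(v)}(0), R^{(v)}(0)\big)$ under $\theta_{-t} \circ \bbP$. The mixing hypothesis \eq{mixing 1} asserts weak convergence of $\theta_{-t} \circ \bbP$ to $\barbbP$ on events of the form $A \cap \{t_1 \le u\}$; a monotone-class/approximation argument --- driven by the ergodicity in (i) --- extends this to the joint weak convergence of $\big(\wtillambda^{(v)}(t), R^{(v)}(t)\big)$ required by (\sect{extension}b-ii). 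Continuity in (\sect{extension}b-i) is handled by noting that, under $\barbbP$, the residual $R(0)$ has density $\lambda \barbbP(T_1 > \cdot)$ on $\bbR_+$, so $R^{(v)}(0) = R(0) \wedge v$ has no atoms inside $(0,v)$ and the exceptional set of $v$ is at most countable, allowing us to pick $v$ freely. Finally, (\sect{extension}b-iii) is precisely hypothesis (ii).

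With Condition (\sect{extension}a) verified, \thr{Blackwell general 1} yields \eq{lambda 1} and \eq{Blackwell general 1} for some $\lambda > 0$. To identify $\lambda$, I would combine the identity $\lambda = \lim_{t\to\infty}\bbE\big[\wtillambda^{(v)}(t)\,1(R(t) \le v)\big]$ taken from the proof of \thr{Blackwell general 1} with the weak convergence and uniform integrability established in the preceding paragraph to conclude that $\lambda = \barbbE\big[\wtillambda^{(v)}(0)\,1(R(0) \le v)\big]$. Applying the Palm inversion formula \eq{inversion 1} to this $\barbbP$-expectation --- and using that on the interval $[0,T_1)$ one has $t_{N(t)-1} = 0$, $R(t) = T_1 - t$, and $\wtillambda^{(v)}(t) = 1/\bbE[T_1^{(v)} \mid \sr{F}_{0-}]$ --- transforms it into the Palm expression displayed in \eq{lambda 3}. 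The equality $\lambda = 1/\bbE[T_1]$ then follows by letting $v \to \infty$ in \eq{lambda 3}, or alternatively by invoking Campbell's formula for the stationary sequence $\{T_n\}$.

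The main obstacle I expect is the gap between the form of the mixing hypothesis \eq{mixing 1}, which couples a general event $A \in \sr{F}$ with only the one-dimensional marginal $\{t_1 \le u\}$, and the joint weak convergence of $\big(\wtillambda^{(v)}(t), R^{(v)}(t)\big)$ that Condition (\sect{extension}b-ii) demands. Since $\wtillambda^{(v)}(t)$ is a functional of the entire past through the conditional expectation $\bbE[T_{N(t)}^{(v)} \mid \sr{F}_{t_{N(t)-1}-}]$, one must bootstrap \eq{mixing 1} to a sufficiently rich class of bounded continuous test functions depending jointly on this conditional-expectation statistic and on the residual. This approximation step, which draws both on the ergodicity in (i) and on the stationary-to-Palm dictionary supplied by \eq{cycle 1} and \eq{inversion 1}, is where the real work of the proof resides.
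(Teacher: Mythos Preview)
Your overall architecture---verify Condition~(\sect{extension}a) (via (\sect{extension}b)) and then invoke \thr{Blackwell general 1}---matches the paper exactly. The genuine gap is in how you propose to bridge the mixing hypothesis \eq{mixing 1} to the weak convergence of $\big(\wtillambda^{(v)}(t),R^{(v)}(t)\big)$ under $\bbP$. You read \eq{mixing 1} as a statement about $\theta_{-t}\circ\bbP$, but it is stated under the \emph{stationary} measure $\barbbP$: it expresses asymptotic independence of $\theta_{-t}\circ A$ and $\{t_1\le u\}$ under $\barbbP$, and says nothing directly about pushforwards of $\bbP$. A monotone-class or ergodicity argument will not by itself transfer this to $\bbP$; the passage from $\barbbP$ to $\bbP$ is precisely where the Palm machinery enters.

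The paper's mechanism is concrete and worth internalising. Using the inversion formula \eq{inversion 1} with parameter $\epsilon>0$, one approximates $\bbE\big[f\big(\wtillambda^{(v)}(t),R^{(v)}(t)\big)\big]$ by $(\lambda\epsilon)^{-1}\barbbE\big[f\big(\wtillambda^{(v)}(t+t_1),R^{(v)}(t+t_1)\big)\,1(t_1\le\epsilon)\big]$, with an error bounded by $(\lambda\epsilon)^{-1}\|f\|\,\barbbE[N(\epsilon)1(N(\epsilon)\ge2)]$ coming from orderliness. Since $\big(\wtillambda^{(v)}(\cdot),R^{(v)}(\cdot)\big)$ is shift-covariant, the mixing condition \eq{mixing 1} (applied under $\barbbE$) handles the $t\to\infty$ limit inside this $\barbbE$-expression; a $\sup/\inf$ sandwich over $s\in[0,\epsilon)$ followed by $\epsilon\downarrow 0$ and right-continuity then pins down the limit as $\barbbE\big[f\big(\wtillambda^{(v)}(0),R^{(v)}(0)\big)\big]$. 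Skorohod representation plus the uniform integrability hypothesis (ii) finish Condition~(\sect{extension}a). So the inversion formula is the key tool \emph{in the middle} of the argument, not merely at the end; for the identity \eq{lambda 3} itself the paper uses the cycle formula \eq{cycle 1} rather than \eq{inversion 1}.
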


\begin{remark} 
\label{rem:Palm 1}
{\rm The mixing condition \eq{mixing 1} is used
in \cite[Theorem 3.2]{Miya1977}.}
\end{remark}

\begin{proof}
Let $\eta_n(\omega) = \theta_{t_n(\omega)}(\omega)$ be the shift operator on
the sample space $\Omega$; then
\begin{align*}
  \eta_1 \circ \big(\widetilde{\lambda}^{(v)}(t), R^{(v)}(t)\big)
& = \bigg(\frac 1{\eta_1 \circ \bbE[T^{(v)}_n\mid\sr{F}_{t_{n-1}-}]},
\,(\eta_1 \circ T_n - (t- \eta_1 \circ t_{n-1})) \wedge v \bigg)\\
  & = \bigg(\frac 1{\bbE[T^{(v)}_n\mid\sr{F}_{t_n-}]},
 \, R^{(v)}(t_{n}) \bigg), \qquad t_n \le t < t_{n+1}.
\end{align*}
Hence for $n\in\bbZ$,
$ \big\{\big(\widetilde{\lambda}^{(v)}(t), R^{(v)}(t)\big);
t_{n-1}\le t < t_n\big\}$
is a stationary sequence under $\bbP$, and 
\begin{equation*}
  \big(\widetilde{\lambda}^{(v)}(t),\, R^{(v)}(t)\big)
= \theta_t \circ \big(\widetilde{\lambda}^{(v)}(t) \circ
   \theta_{-t},\, R^{(v)}(t) \circ \theta_{-t})\big)
= \theta_t \circ \big(\widetilde{\lambda}^{(v)}(0), R^{(v)}(0)\big)
\end{equation*}
is a stationary process under $\ol{\bbP}$. Let $f(x,y)$ be a non-negative
bounded continuous function on $\bbR_+^2$; then by \eq{mixing 1}, for
$\epsilon > 0$,
\begin{equation*}
  \lim_{t\to\infty} \ol{\bbE}\big[f\big(\wtillambda^{(v)}(t), \,R^{(v)}(t)\big)\, 1(t_1 \le \epsilon)\big]
= \ol{\bbE}\big[f\big(\wtillambda^{(v)}(0), \,R^{(v)}(0)\big)\big]\,
\ol{\dd{P}}(t_1 \le \epsilon).
\end{equation*}
On the other hand, by \eq{inversion 1},
\begin{align*}
  &\bigg|\bbE\big[ f\big(\wtillambda^{(v)}(t), \,R^{(v)}(t)\big)\big]
  - \frac {\ol{\bbE}\big[f\big(\wtillambda^{(v)}(t+t_1), \,R^{(v)}(t+t_1)\big) 
\,1(t_1 \le \epsilon) \big]} {\lambda \epsilon}
 \bigg|\\
  & \quad  \le \frac {\|f\|}{\lambda \epsilon}
  \ol{\bbE}\big[N(\epsilon)\,1\big(N(\epsilon) \ge 2\big)\big],
\end{align*}
where $\|f\| = \sup_{(x,y) \in \dd{R}_+^2} |f(x,y)|$. This and \eq{mixing 1}
imply that
\begin{align*}
 &\quad\; \limsup_{t\to\infty} \bbE\big[ f\big(\wtillambda^{(v)}(t),
     \,R^{(v)}(t)\big)\big] \\
 &\le\limsup_{t\to\infty} \frac 1{\lambda \epsilon} \ol{\bbE}\Big[
\sup_{s \in [0,\epsilon)} f\big(\wtillambda^{(v)}(t+s), \,R^{(v)}(t+s)\big)
\, 1(t_1 \le \epsilon) \Big]   
  + \frac {\|f\|}{\lambda \epsilon}
   \ol{\bbE}\big[N(\epsilon)\,1\big(N(\epsilon) \ge 2\big)\big]\\
  &= \frac 1{\lambda \epsilon}
\ol{\bbE}\Big[\sup_{s \in [0,\epsilon)} f\big(\wtillambda^{(v)}(s), 
\,R^{(v)}(s)\big)\Big]\, \ol{\dd{P}}(t_1 \le \epsilon) 
  + \frac {\|f\|}{\lambda \epsilon}
\ol{\bbE}\big[N(\epsilon)\,1(N(\epsilon) \ge 2)\big].
\end{align*}
Now   
\begin{equation*}
  \lim_{\epsilon\downarrow 0} \frac {\ol{\dd{P}}(t_1 \le \epsilon)}{\lambda\epsilon}
= 1\qquad\hbox{and}\qquad
  \lim_{\epsilon \downarrow 0}
\frac{\ol{\bbE}\big[N(\epsilon)\,1(N(\epsilon) \ge 2)\big]}{\lambda\epsilon}=0,
\end{equation*}
implying that
\begin{equation*}
 \limsup_{t\to\infty} \bbE\big[f\big(\wtillambda^{(v)}(t),
 \,R^{(v)}(t)\big)\big] 
\le \lim_{\epsilon\downarrow 0} \ol{\bbE}\Big[ \sup_{s \in [0,\epsilon)}
 f\big(\wtillambda^{(v)}(s), \,R^{(v)}(s)\big)\Big] 
   = \ol{\bbE}\big[f\big(\wtillambda^{(v)}(0), \,R^{(v)}(0)\big)\big],
\end{equation*}
by the right-continuity of $f\big(\wtillambda^{(v)}(t), \,R^{(v)}(t)\big)$.
Similarly, we have
\begin{equation*}
  \liminf_{t\to\infty} \bbE\big[f\big(\wtillambda^{(v)}(t),
 \, R^{(v)}(t)\big)\big] 
\ge \ol{\bbE}\big[ f\big(\wtillambda^{(v)}(0), \,R^{(v)}(0)\big)\big].
\end{equation*}
Hence, the distribution of $\big(\wtillambda^{(v)}(t), \,R^{(v)}(t)\big)$ under
$\dd{P}$ converges weakly to $\big(\wtillambda^{(v)}(0), \,R^{(v)}(0)\big)$
under $\ol{\dd{P}}$. Then we can choose their almost surely convergent version
by the Skorohod representation theorem. This and the uniform integrability
assumption (ii) conclude condition (\sect{extension}a) because $R^{(v)}(0)$ is
bounded by $v$.  Equation \eq{lambda 3}
is an immediate consequence of \eq{cycle 1}, completing the proof.
\end{proof}

\section{Concluding remarks}
\label{sect:concluding}
\setnewcounter

In this paper, we  have used a certain martingale to give a new approach to
Blackwell's renewal theorem and its extensions for general counting processes.
One may envisage applying this approach to other problems.


For example, consider a diffusion approximation of the renewal process $N$ of
Section 2 for which $\bbE[T^2]<\infty$. 
Scale $N(t)-\lambda t$ as $\widetilde{N}_n(t)
:= n^{-1/2} \big(N(nt)-\lambda n t\big)$;
this is called diffusion scaling. It is well known that $\widetilde{N}_n(\cdot)$
converges weakly to the Brownian motion $B(t)$ with 
$\var B(t) = \lambda^3\sigma^2_T t$ in an appropriate function space with the
Skorokhod topology.  This is usually proved by the central limit theorem
and a time change (see e.g.\
\cite[Theorem 5.11]{ChenYao2001} and \cite[Corollary 7.3.1]{Whit2002}).
To derive this result in the framework of this paper, let
$\widetilde{R}_n(t) = \lambda R(nt)/\sqrt{n}$ and 
$\widetilde{M}_n(t) = M(nt)/\sqrt{n}$; then by Theorem 2.1, 
 \begin{equation} \label{eq:MMM 1}
 \widetilde{N}_n(t) = \lambda \widetilde{R}_n(t) + \widetilde{M}_n(t).
\end{equation}
Observe that as $n\to\infty$, $\widetilde{R}_n(t)\to 0$ in probability because
 \begin{equation*}
 \limsup_{n\to\infty} \bbE[\widetilde{R}_{n}(t)]
  \le \limsup_{n\to\infty} \lambda \bbE[T^2]/\sqrt{n} = 0.
\end{equation*}
Further, Lemma 2.3 implies that almost surely,
\begin{equation} \label{eq:MMM 2}
 \lim_{n\to\infty} \langle \widetilde{M}_n(t) \rangle
  = \lim_{n\to\infty} \frac{N(nt)}{n}\,\lambda^2 \sigma^2_T 
  = \lambda^3 \sigma^2_T t.
\end{equation}
Hence \eq{MMM 1} would imply that $\widetilde{N}_n(\cdot)$ converges weakly to 
the martingale with deterministic quadratic variation $\sigma^2_T t$, and
this is just the Brownian motion $B(t)$ with variance $\sigma^2_T t$ if the limiting process of $ \widetilde{N}_n(\cdot)$ is continuous in time.
To convert this argument into a formal proof, we should need to verify a
technical condition, called $C$-tightness (see e.g.\
\cite[Theorem 2.1]{Whit2007a}); even without this, the above argument
elucidates the mechanism of the diffusion approximation.

Thus, while the present approach is useful for studying counting processes, it
may not be the case for studying stochastic models in applications.
For example, counting processes appear as input data for stochastic models such
as queueing and risk processes. In these applications, the semimartingale representation for a counting process may not be convenient because such stochastic
processes are functionals of counting processes. In this situation, the general
formulation in \sectn{deriving} would be useful if we can find an appropriate
process $Y(t)$, which may not be a counting process but includes it as one of
components. The second author recently studied this type of application in
\cite{Miya2017,Miya2017a} for diffusion approximations and tail asymptotics of
the stationary distributions for queues and their networks.
This may be a direction for future study.

\appendix

\section*{Appendix}
\label{app:proofs}
\setnewcounter

\setcounter{section}{1}

\subsection{Proof of \eq{M2 2}}
\label{app:M2 2}

Apply \lem{martingale 1} with $Y(t) = M^2(t)$ for which $Y'(t)=0$, and therefore
\begin{align*}
  \langle M\rangle(t) & = D_Y(t) = \sum_{n=0}^{N(t)-1} \big(\bbE[M^2(t_n) \mid
\sr{F}_{t_n-}] - M^2(t_n-)\big)\\
  &= \sum_{n=0}^{N(t)-1}\big(\bbE\big[\big(M(t_n-) + (1-\lambda T_{n+1})\big)^2 \mid
\sr{F}_{t_n-}\big] - M^2(t_n-)\big)\\ 
  &= \sum_{n=0}^{N(t)-1} \bbE\big[2M(t_n-) (1-\lambda T_{n+1}) +(1-\lambda T_{n+1})^2 
\mid \sr{F}_{t_n-}\big]\\
  &= \sum_{n=0}^{N(t)-1} \dd{E}\left[(1-\lambda T_{n+1})^{2}\right]\\
  &= N(t) \dd{E}\left[(1-\lambda T)^{2}\right] = \lambda^{2} \sigma_{T}^{2} N(t),
\end{align*}
since $\bbE[1-\lambda T_{n+1}\mid\sr{F}_{t_n-}] = 0$. Thus, we have \eq{M2 2}.

\subsection{Proof of \lem{RM 1}}
\label{app:RM 1}

The main idea of this proof is to apply the key renewal theorem. For this,
recall that $t_{N(t)-1} \le t < t_{N(t)}$, and rewrite $R(t) M(t)$ as
\begin{equation}
\label{eq:RM 2}
  R(t)M(t) = (t_{N(t)} - t) \sum_{\ell=1}^{N(t)} (1 - \lambda T_\ell) 
 = Z_1(t) + Z_2(t),
\end{equation}
where
\begin{align*}
   Z_1(t) &= (T_{N(t)} + t_{N(t)-1} - t) (1 - \lambda T_{N(t)}),\\
   Z_2(t) &= (T_{N(t)} + t_{N(t)-1} - t)
\sum_{\ell=1}^{N(t)-1} (1 - \lambda T_{\ell}).
\end{align*}

We consider $\bbE[Z_1(t)]$ and $\bbE[Z_2(t)]$ separately. Let
\begin{equation*}
  z_3(t) = \bbE[(T-t)(1-\lambda T)\,1(T>t)], \qquad t \ge 0.
\end{equation*}
Then, much as for \eq{gre R 1}, the independence of $t_{n-1}$ and $T_n$ and the
key renewal theorem (see e.g.\ \cite[Example 2.6]{Asmu2003}) yield
\begin{align}
\label{eq:Key 1}
 \lim_{t\to\infty} \bbE\big[Z_1(t)\big]
   & = \lim_{t\to\infty} \bbE\bigg[\sum_{n=1}^\infty \big(T_n -(t-t_{n-1})\big)
(1 - \lambda T_n) \,1(0 \le t-t_{n-1} < T_n)\bigg] \nonumber\\
 & = \lim_{t\to\infty} \bbE\bigg[\sum_{n=1}^\infty z_3(t-t_{n-1})
  \,1(t_{n-1} \le t)\bigg] \nonumber\\
 & = \lambda \int_0^\infty z_3(u) \, \rd u 
 = \lambda \bbE\bigg[\int_0^T (T-u)(1-\lambda T) \, \rd u \bigg] \nonumber\\
 & = \tfrac 12 \lambda \big(\bbE[T^2] - \lambda \bbE[T^3]\big).
\end{align}

In considering $\bbE[Z_2(t)]$, the limiting operations for the key renewal
theorem are nested, so we use the extra condition (4a).
We prove that $\bbE[T^3] < \infty$ and that
\begin{equation}
\label{eq:Key 2}
  \lim_{t\to\infty} \bbE\big[Z_2(t)\big] 
  = \tfrac 12 \lambda^3 \bbE(T^2) \sigma_T^2.
\end{equation}
First rewrite $\bbE[Z_2(t)]$ as
\begin{align}
\label{eq:expand 1}
  \bbE\bigg[(T_{N(t)} + t_{N(t)-1} - t) &\sum_{\ell=1}^{N(t)-1} (1-\lambda T_\ell)\bigg] 
= \bbE\bigg[\sum_{n=1}^\infty (t_n-t) \sum_{\ell=1}^{n-1} (1-\lambda T_\ell)
\,1(t_{n-1} \le t < t_n)\bigg] \nonumber\\
  & = \bbE\bigg[\sum_{\ell=1}^\infty (1 - \lambda T_\ell)
\sum_{n=\ell+1}^\infty (t_n - t) \,1(t_{n-1} \le t < t_n)\bigg]\nonumber\\
  & = \bbE\bigg[\sum_{\ell=1}^{\infty} (1 - \lambda T_{\ell}) V_{\ell}(t)
 \,1(t_{\ell-1} \le t) \bigg],
\end{align}
where
\begin{equation*}
  V_\ell(t) = \bbE\bigg[\sum_{n=\ell+1}^\infty (t_n - t) \,1(t_{n-1} \le t<t_n)
\biggm| \sr{F}_{t_{\ell-1}}\bigg], \qquad t \ge 0, \ell \ge 1.
\end{equation*}

Let $\tN(\cdot)$ be an independent copy of $N(\cdot)$, let
$\widetilde{t}_n$ be the $n^{\rm th}$ counting epoch of the renewal process
$\widetilde{N}(\cdot)$ similar to $N(\cdot)$, and let $\widetilde{T}_n =
\widetilde{t}_n - \widetilde{t}_{n-1}$ for $n \ge 1$, where $\widetilde{t}_0=0$.
Similarly, let $\widetilde{R}(t)$ be the residual time to the next jump of
$\tN(\cdot)$ at time $t$.  For $t\ge0$ define
\begin{align*}
  \widetilde{V}(t\mid x) &= \bbE\Big[\sum_{n=2}^\infty (\widetilde{t}_n - t)
1(\widetilde{t}_{n-1} \le t < \widetilde{t}_n)\,\Big|\, \widetilde{T}_1 
= x \Big]\nonumber\\
  &= \bbE\Big[\sum_{n=2}^\infty \big(\widetilde{t}_n - \widetilde{t}_1 
- (t - \widetilde{t}_1)\big) \,1(\widetilde{t}_{n-1} - \widetilde{t}_1 \le t 
- \widetilde{t}_1 < \widetilde{t}_n - \widetilde{t}_1)\,\Big|\, \widetilde{T}_1 = x \Big].
\end{align*}
Since the last formula is independent of $\widetilde{T}_1$ and represents the
residual time to the next jump at time $t-x$, it equals
$\bbE\big[\widetilde{R}(t-x)\big]$.

For notational convenience in what follows, define a function $r(\cdot)$ by
\begin{equation*}
 r(t) = \begin{cases}
 \dd{E}[\widetilde{R}(t)],  &  t \ge 0, \\
 0 &  t < 0, 
\end{cases}
\end{equation*}
 and as earlier let $C = \frac 12 \lambda \dd{E}[T^2]$. Since for
$n\ge\ell$, $t_n-t_\ell$ is independent of $\sr{F}_{t_{\ell-1}}$, 
\begin{equation*}
  V_\ell(t) = \widetilde{V}(t-t_{\ell-1}\,\mid\, T_\ell) 
  = r\big(t-(t_{\ell-1}+T_\ell)\big), \qquad t \ge 0.
\end{equation*}
Thus, \eq{expand 1} can be rewritten as
\begin{align}
\label{eq:expand 2}
 \bbE\bigg[(T_{N(t)} + t_{N(t)-1} & - t) \sum_{\ell=1}^{N(t)-1}
(1 - \lambda T_{\ell})\bigg]  
   = \bbE\bigg[\sum_{\ell=1}^{\infty} (1 - \lambda T_{\ell})
 r\big(t - (t_{\ell-1}+T_{\ell})\big) \,1(t_{\ell-1} \le t )\bigg].
\end{align}
Denote the right-hand side of  \eq{expand 2} by $W(t)$; decompose it as
  $W(t) = W_1(t) + W_2(t)$,
where
\begin{align*}
 & W_1(t) = \bbE\bigg[\sum_{\ell=1}^\infty (\lambda T_\ell - 1)
\Big(C \,1(T_\ell\le t - t_{\ell-1}) - r\big(t - (t_{\ell-1}+T_\ell)\big)
\Big) 1(t_{\ell-1} \le t )\bigg],\\
 & W_2(t) = C \bbE\bigg[\sum_{\ell=1}^\infty (1 - \lambda T_\ell)
\,1(T_\ell\le t - t_{\ell-1})\bigg].
\end{align*}
Define
\begin{align*}
  w_1(t) = \bbE[(\lambda T - 1)\big(C-r(t-T)\big)
\, 1(T \le t)]\qquad \mbox{and}\qquad 
  w_2(t) = C\, \bbE[(1 - \lambda T)1(T \le t)].
\end{align*}
It is readily checked that, for $i=1,2$, $W_i(\cdot)$ is the solution of the
general renewal equation with the generator $w_i(\cdot)$.
Consider first $w_1(t)$, and introduce
\begin{equation*}
  g(t) = \begin{cases}
   C - r(t), & t \ge 0,  \\
   0, &   t < 0,
\end{cases}
\end{equation*}
so that from the definition of $w_1$,
\begin{align*}
  w_1(t) & = \bbE[\lambda T g(t-T) 1(T \le t)] - \bbE[g(t-T) 1(T \le t)]\\
  & = \lambda\int_0^\infty u g(t-u)\,F(\rd u) - \int_0^\infty g(t-u)\,F(\rd u).
\end{align*}
We show that under condition (4a), the last two integrals are directly Riemann
integrable.  To this end, let $I^\delta_n(u) = (n\delta, (n+1)\delta]$ for
$\delta > 0$ and $u \ge 0$. Then
\begin{equation*}
  \sup_{t\in I^\delta_n(0)} \int_0^\infty u\, g(t-u) \,F(\rd u)
\le \int_0^\infty u\,\sup_{t \in I^\delta_n} g(t-u) \,F(\rd u).
\end{equation*}
Similarly, 
\begin{equation*}
  \int_0^\infty u \inf_{t \in I^\delta_n} g(t-u) \,F(\rd u)
\le \inf_{t \in I^\delta_n(0)} \int_0^\infty u \,g(t-u) \,F(\rd u).
\end{equation*}

Then because $|g|$ is bounded, $\bbE[T] < \infty$, and for fixed $u\ge 0$ and
$t\ge u$ $g(t - u)$ is directly Riemann integrable, the first integral
$\int_0^\infty u g(t-u)\, F(\rd u)$ is directly Riemann integrable.
Similarly, the second integral $\int_0^\infty g(t-u)\, F(\rd u)$ is also
directly Riemann integrable.  Hence, $w_1(t)$ is directly Riemann integrable.
We then compute the integration on $w_1$:
\begin{align*}
  \int_0^s w_1(t)\, \rd t & = \bbE\bigg[\int_0^s (\lambda T - 1)g(t-T) \,1(T \le t) \, \rd t \bigg] \\
 & = \bbE\bigg[\int_0^{(s-T)^+} (\lambda T - 1)g(u)\,1(u \ge 0)\,\rd u\bigg]\\
 & = \bbE\bigg[\int_0^s (\lambda T - 1) g(u)\,\rd u \bigg] 
  - \bbE\bigg[\int_{(s-T)^+}^s (\lambda T - 1) g(u) \, \rd u \bigg]\\
 & = \bbE\bigg[\int_{(s-T)^+}^s (1 - \lambda T) g(u)\,\rd u \bigg]\\
 & = \int_0^s \bbE\Big[(1-\lambda T) 1(T> s-u) \Big] g(u) \, \rd u,
\end{align*}
which is finite as $s \to \infty$ if $\bbE(T^2) < \infty$ because $|g(u)|$ is
bounded by $C$. Further, it is not hard to see that this integral converges
to 0 as $s \to \infty$.

We next consider $w_2(t)$. Since
\begin{equation*}
  w_2(t) = C\, \bbE\big[(1 - \lambda T)\big(1-1(T > t)\big)\big]
= - C \,\bbE[(1 - \lambda T)1(T > t)],
\end{equation*}
is directly Riemann integrable because $\bbE(T^2) < \infty$, and
\begin{equation}
\label{eq:w3 1}
  \int_0^\infty w_2(t)\, \rd t = \lambda^2 C\,\bbE\bigg[\int_0^\infty (T-\bbE[T])1(T > t)\, \rd t \bigg] = \lambda^2 C\, \sigma_T^2. 
\end{equation}

Hence, $w_1(t) + w_2(t)$ is directly Riemann integrable, and therefore the key
renewal theorem and \eq{w3 1} yield
\begin{align}
\label{eq:Key 3}
  \lim_{t\to\infty} \bbE\bigg[(T_{N(t)} + t_{N(t)-1} - t) \sum_{\ell=1}^{N(t)-1} (1 - \lambda T_\ell)\bigg]
& = \lambda \bigg( \int_0^\infty w_1(t)\, \rd t + \int_0^\infty w_2(t)\, \rd t \bigg) \nonumber\\
 & = \lambda \int_0^\infty w_2(t)\, \rd t  = \lambda^2 C\,\sigma_T^2.
\end{align}
Recalling that $C =  \frac 12 \lambda \bbE[T^2]$, \eq{Key 2} follows.
This proves \lem{RM 1}.

\subsection*{Acknowledgements}%
We thank two referees for helpful comments and suggestions.
DJD's work was done as an Honorary Professorial Associate at the University
of Melbourne.  MM's work was partly supported by JSPS KAKENHI Grant Number
16H027860001.

%

\def\cprime{$'$} \def\cprime{$'$} \def\cprime{$'$} \def\cprime{$'$}
  \def\cprime{$'$} \def\cprime{$'$} \def\cprime{$'$}

\end{document}